\def\green{\null}
\newtheorem{theorem}{Theorem}
\newtheorem{corollary}[theorem]{Corollary}
\newtheorem{lemma}[theorem]{Lemma}
\newtheorem{proposition}[theorem]{Proposition}
\theoremstyle{remark}
\newtheorem{remark}[theorem]{Remark}
\theoremstyle{definition}
\newtheorem{claim}{Claim}
\newenvironment{claimproof}{\noindent\textsc{Proof of claim}}{\hfill$\qed$}
\crefname{claim}{Claim}{Claims}
\def\tinyskip{\vspace{2pt}}
\newcounter{subcases}[case]
\renewcommand*\thesubcases{\arabic{subcases}}
\newenvironment{subcases}
{%
 \setcounter{subcases}{0}%
    \def\subcase
      {%
        \tinyskip
        \par\noindent
        \refstepcounter{subcases}%
        \textit{Subcase \thecase.\thesubcases.}              
      }%
}
{%
}
\renewcommand{\v}[1]{\boldsymbol{{#1}}}
\newcommand{\cvec}[1]{\begin{pmatrix}#1\end{pmatrix}}
\DeclarePairedDelimiter\floor{\lfloor}{\rfloor}
\DeclarePairedDelimiter\set{\{}{\}}
\DeclarePairedDelimiter\brac{[}{]}
\newcommand{\card}[1]{\ensuremath{\# #1}} 
\newcommand{\WP}{W\hskip -1.6pt P}
\newcommand{\Pm}{P\hskip -1.6pt m}
\newcommand{\TA}{T\hskip -1.6pt A}
\DeclareMathOperator{\conv}{conv} 
\DeclareMathOperator{\wed}{W}
\DeclareMathOperator{\pyr}{pyr}
\newcommand{\R}{\mathbb{R}}
\renewcommand{\ge}{\geqslant} 
\renewcommand{\le}{\leqslant}
\begin{document}

\date{\today}
\title[A lower bound theorem for $d$-polytopes with at most $2d$ vertices]{A refined lower bound theorem for $d$-polytopes with at most $2d$ vertices}

\author{Guillermo Pineda-Villavicencio}
\author{Jie Wang}
\author{David Yost}
\address{School of Information Technology, Deakin University, Locked Bag 20000, Geelong VIC. 3220, Australia}
\email{\texttt{guillermo.pineda@deakin.edu.au}}

\address{Federation University, Mt. Helen, Vic. 3350, Australia}
\email{\texttt{wangjiebulingbuling@foxmail.com},\texttt{d.yost@federation.edu.au}}


%

\begin{abstract} In 1967, Gr\"unbaum  conjectured  that the function 
$$
\phi_k(d+s,d):=\binom{d+1}{k+1}+\binom{d}{k+1}-\binom{d+1-s}{k+1},\; \text{for  $2\le s\le d$}
$$
provides the minimum number of $k$-faces for a $d$-dimensional polytope (abbreviated as a $d$-polytope) with $d+s$ vertices. In 2021, Xue proved this conjecture for each $k\in[1\ldots d-2]$ and characterised the unique minimisers, each having $d+2$ facets. 

\green{In this paper, we refine Xue's theorem  by considering $d$-polytopes with  $d+s$ vertices ($2\le s\le d$) and at least $d+3$ facets. If $s=2$, then there is  precisely one minimiser for many values of $k$.  For other values of $s$, the number of $k$-faces is at least  $\phi_k(d+s,d)+\binom{d-1}{k}-\binom{d+1-s}{k}$, which is met  by precisely two polytopes in many cases, and up to five polytopes for certain values of $s$ and $k$}. We also characterise the minimising polytopes.
\end{abstract}

\maketitle


\section{Introduction}
\label{sec:intro}

 We denote by $f_{k}$ the number of $k$-faces in a polytope; the \textit{$f$-vector} of a $d$-polytope is the sequence $(f_0,\ldots, f_{d-1})$ of the number of faces in the polytope. Xue's theorem settling Gr\"unbaum's 1967 conjecture ~\cite[Sec.~10.2]{Gru03} reads as follows.

\begin{theorem}[$d$-polytopes with at most $2d$ vertices,  Xue 2021]\label{thm:at-most-2d}  
Let $d\ge 2$ and $1\le s \le d$. If $P$ is a $d$-polytope with $d+s$ vertices, then
\[f_k(P)\ge \phi_k(d+s,d),\;  \text{for all $k\in[1\ldots d-1]$}.\] 
Furthermore, for some $k\in[1\ldots d-2]$, there is a unique polytope whose number of $k$-faces equals $\phi_k(d+s,d)$,  and this minimiser has $d+2$ facets.
\end{theorem}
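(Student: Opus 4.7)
The plan is to proceed by induction on the dimension $d$, with the base case $d=2$ immediate since a polygon with $s+2$ vertices satisfies $f_1 = s+2 = \phi_1(s+2,2)$. The workhorse identity will be
\[
(k+1)\, f_k(P) = \sum_{v} f_{k-1}(P/v),
\]
where the sum is over vertices of $P$ and $P/v$ is the vertex figure at $v$, itself a $(d-1)$-polytope with $\deg_P(v)$ vertices. The case $s=1$ forces $P$ to be a $d$-simplex, where equality holds trivially: $f_k(\Delta^d)=\binom{d+1}{k+1}=\phi_k(d+1,d)$. For the inductive step with $s\ge 2$, I would split into two cases according to whether $P$ is a pyramid.

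If $P$ is a pyramid over some $(d-1)$-polytope $Q$ with $d-1+s$ vertices, then $f_k(P)=f_k(Q)+f_{k-1}(Q)$, and applying induction to $Q$, combined with the Pascal-type identity
\[
\phi_k(d-1+s,d-1)+\phi_{k-1}(d-1+s,d-1)=\phi_k(d+s,d),
\]
yields the bound. If instead $P$ is not a pyramid, every vertex $v$ has $\deg_P(v)\ge d$ by Balinski, and one applies the inductive hypothesis to each vertex figure $P/v$ with parameter $s_v=\deg_P(v)-(d-1)$. Summing and using the identity above lower-bounds $(k+1)f_k(P)$, provided one controls $\sum_v \deg_P(v) = 2f_1(P)$ from below.

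The main obstacle is the non-pyramid case. Vertex degrees vary, and since the function $s\mapsto \phi_k(d-1+s,d-1)$ is strictly convex, one needs both (i) a sharp lower bound on $f_1(P)$ (for instance via Kalai's rigidity inequality, adapted to non-simplicial $P$ by taking a simplicial refinement) and (ii) a rearrangement or convexity argument concentrating the ``degree mass'' on a few heavy vertices. For non-simplicial $P$ this is further complicated because a $k$-face can be incident to more than $k+1$ vertices, so the counting identity above must be replaced by an inequality and its defect carefully tracked; showing that the balanced (non-pyramid) configurations are never better than the extremal pyramid is where the bulk of the work lies.

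Uniqueness should then follow by tracing equality through the induction. Equality in the pyramid step forces $Q$ itself to be a minimiser, so iterating peels off apices until one reaches a $d'$-polytope with $d'+s$ vertices and $d'+2$ facets, which by Gale duality must be a free sum of two simplices $\Delta^{a}\oplus\Delta^{b}$ with $a+b=d'+1$ and $\min(a,b)\ge 1$. Equality in the non-pyramid step forces every vertex figure to be a minimiser simultaneously and every degree to sit at the extreme of the convexity estimate; for the relevant range of $k$ this is incompatible with $P$ being non-pyramidal, leaving the iterated pyramid over $\Delta^{s-1}\oplus\Delta^{d-s+1}$ as the unique minimiser with $d+2$ facets.
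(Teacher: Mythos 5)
This theorem is quoted from Xue's 2021 paper and is not proved here, so there is no in-paper argument to compare against; judged on its own terms, your proposal has a fatal gap in the non-pyramid case. The identity $(k+1)f_k(P)=\sum_v f_{k-1}(P/v)$ holds only for simplicial polytopes. In general, $\sum_v f_{k-1}(P/v)=\sum_G f_0(G)\ge (k+1)f_k(P)$, where the sum runs over the $k$-faces $G$ of $P$; that is, vertex-figure summation bounds $f_k(P)$ from \emph{above}, so lower bounds on the $f_{k-1}(P/v)$ transfer no information to a lower bound on $f_k(P)$. This is not a ``defect to be tracked'': the extremal polytopes here (the triplices, which have only $d+2$ facets) are very far from simplicial, and passing to a simplicial refinement destroys the face numbers you are trying to bound. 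Two further points go wrong in the same part of the argument: the map $s\mapsto\phi_k(d-1+s,d-1)$ is concave, not convex (its successive differences are $\binom{d-s}{k}$, decreasing in $s$), so the Jensen-type step runs in the opposite direction from the one you describe; and for $s$ close to $d$ a vertex may have degree up to $d+s-1>2(d-1)$, so its vertex figure has more than $2(d-1)$ vertices and falls outside the inductive hypothesis. Finally, your description of the minimiser is dualised: by \cref{lem:dplus2facets} a polytope with $d+2$ facets is an iterated pyramid over a \emph{product} $T(m)\times T(a-m)$, and the actual minimiser is the triplex $M(s,d-s)$, a $(d-s)$-fold pyramid over a simplicial $s$-prism; an iterated pyramid over $\Delta^{s-1}\oplus\Delta^{d-s+1}$ is (up to duality) the wrong object and does not even have $d+s$ vertices.

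The route that actually works, and the one this paper imports as \cref{prop:number-faces-outside-facet} and uses throughout \cref{thm:strong-minimiser}, is facet-based rather than vertex-figure-based: choose a nonsimple vertex $v$ of maximal degree and a facet $F$ not containing $v$, split $f_k(P)\ge f_k(F)+\#\{k\text{-faces meeting }V(P)\setminus V(F)\}$, bound $f_k(F)$ by induction on the dimension, and bound the second term by building a chain of faces $F_1,\dots,F_r$ with $\dim F_i=d-i+1$, each containing the $i$-th outside vertex and none of the earlier ones, which yields the additive contribution $\sum_i\binom{d+1-i}{k}$. Your pyramid reduction and the identity $\phi_k(d-1+s,d-1)+\phi_{k-1}(d-1+s,d-1)=\phi_k(d+s,d)$ are correct and do appear in that argument, but the non-pyramid case — which you correctly identify as the bulk of the work — cannot be completed with the tools you propose.
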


The structure of  $d$-polytope with $d+2$ facets is well understood; see, for instance, \cite[Sec.~3]{McMShe70} and \cref{lem:dplus2facets}. In lower bound theorems for  $d$-polytopes with $2d+1$ vertices \cite{PinYos22,Xue22} (see also \cref{thm:2dplus1}) and $2d+2$ vertices \cite{PinTriYos24}, there is a dichotomy among the minimisers: they have either $d+2$ or $d+3$ facets. Naturally, $(d-1)$-polytopes with at most $2(d-1)$ vertices prominently featured as potential facets in the proofs of these two theorems. Pineda-Villavicencio~\cite[Prob.~8.7.11]{Pin24} conjectured that this dichotomy extends to minimisers of the number of $k$-faces among $d$-polytopes with at most $3d-1$ vertices. In light of these reflections, we find it natural to extend this dichotomy to $d$-polytopes with at most $2d$ vertices by refining \Cref{thm:at-most-2d}. 

Since the case of $d$-polytopes with $d+2$ facets is completely covered by \cref{thm:at-most-2d}, our main theorem deals with the case of $d$-polytopes with $d+3$ or more facets. It  will be stated in more detail and proved in subsequent sections.


\green{\begin{theorem}[Refined theorem for $d$-polytopes with at most $2d$ vertices (Short version)]
\label{thm:at-most-2d-refined-short}  
Given parameters $d\ge 3$ and $2\le s \le d$, and a  $d$-polytope $P$ with $d+s$ vertices, the following statements hold: 
\begin{enumerate} [{\rm (i)}]
    \item If $s=2$ and $P$ has at least $d+\ell$ facets where $\ell\in [3\ldots d]$, then $f_k(P)\ge \phi_k(d+2,d)+\binom{d-1}{k-1}-\binom{d-\ell+1}{k-\ell+1}$ for all $k\in[1\ldots d-2]$. Moreover, for each such $\ell$ and $k\ge \ell-1$ there is a unique polytope  whose number of $k$-faces equals this lower bound.
    \item If $s\in [3\ldots d]$ and $P$ has at least $d+3$ facets, then $f_k(P)\ge\zeta_k(d+s,d)$.  There are two fundamental examples for which the number of $k$-faces equals $\zeta_k(d+s,d)$ for each $k\in[1\ldots d-2]$. Additionally, if  $s<d$ and $k\ge d-s+2$, then a third minimiser arises, and when $s=4$, two further examples appear.  
\end{enumerate}
\end{theorem}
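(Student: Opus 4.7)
The plan is to establish the theorem in three stages: identifying and verifying the minimising polytopes, proving the lower bound by induction on dimension, and analysing equality to characterise the minimisers.

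First, I would construct the candidate minimisers explicitly from pyramidal and prismatic operations applied to Xue's $d+2$-facet minimisers. For part (i), the relevant polytope is (up to isomorphism) an iterated pyramid of height $\ell-2$ over a free sum of two simplices, which has $d+2$ vertices, $d+\ell$ facets, and a distinguished apex vertex whose figure is a $(d-1)$-polytope on $d+\ell-2$ vertices; a direct $f$-vector count should then yield $f_k=\phi_k(d+2,d)+\binom{d-1}{k-1}-\binom{d-\ell+1}{k-\ell+1}$. For part (ii), analogous iterated pyramidal and prismatic constructions over Xue's $(d-1)$-dimensional minimisers with $(d-1)+s$ vertices would produce the two fundamental examples, and the supplementary minimisers in the exceptional ranges would be found from small-dimensional coincidences in the $f$-vector formulas, identified by direct computation.

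For the lower bound itself, I would argue by induction on $d$, with the base cases $d=3,4$ handled via known classifications of low-dimensional polytopes. The inductive step begins by choosing a vertex $v$ of $P$ and decomposing the $k$-faces of $P$ into those containing $v$ --- in bijection with $(k-1)$-faces of the vertex figure $\lk_P(v)$ --- and those lying in the $(d-1)$-polytope $Q=\conv(V(P)\setminus\{v\})$. Bounding the first contribution via \Cref{thm:at-most-2d} or the inductive form of the present theorem applied to $\lk_P(v)$, and the second via \Cref{thm:at-most-2d} applied to $Q$, should yield the required estimate once $v$ is chosen carefully. A good choice of $v$ --- for example a vertex lying in the fewest facets, analogous to the selection used in existing proofs for $2d+1$ and $2d+2$ vertices --- is essential to force $Q$ and $\lk_P(v)$ into a combinatorial class amenable to induction. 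Since the distinction between polytopes with $d+2$ and $d+3$ facets plays a central role, I would lean heavily on the McMullen--Shephard description of $d$-polytopes with $d+2$ facets (\Cref{lem:dplus2facets}) to handle the exceptional vertex figures.

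For the uniqueness and exact count of minimisers, equality must be tracked through the induction: it forces $\lk_P(v)$ to be a listed minimiser of the inductive theorem and $Q$ to be a Xue minimiser, and the combinatorial compatibility between these two restrictions pins down $P$ uniquely in most ranges. The exceptional thresholds $k\geq \ell-1$ in part (i), and $k\geq d-s+2$ together with $s=4$ in part (ii), are precisely where additional assemblies become compatible. The main obstacle will be this combinatorial compatibility analysis --- enumerating exactly the supplementary minimisers at $s=4$, and ruling out spurious polytopes that come numerically close to the bound but are non-realisable. This step is likely to require auxiliary Gale-diagram arguments, since Gale duality is the standard tool for low-codimension polytope classification and the minimising polytopes here live in the low-codimension regime.
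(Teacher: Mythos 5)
Your overall architecture (explicit candidate minimisers, induction on $d$ with base cases $d=3,4$ from classifications, equality threaded through the induction) matches the paper's, but the core inductive step as you describe it has a genuine gap. You propose to split the $k$-faces of $P$ into those containing a chosen vertex $v$ and those ``lying in the $(d-1)$-polytope $Q=\conv(V(P)\setminus\{v\})$'', and to lower-bound the second class by applying \cref{thm:at-most-2d} to $Q$. First, $Q$ is generically a $d$-polytope, not a $(d-1)$-polytope. More importantly, the inequality runs the wrong way: every face of $P$ avoiding $v$ is a face of $Q$, but $Q$ typically acquires \emph{new} faces filling the hole left by $v$ (delete an apex of the octahedron: four of the eight triangles survive and a new square facet appears). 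Hence the number of $k$-faces of $P$ avoiding $v$ is bounded \emph{above}, not below, by $f_k(Q)$, so a lower bound on $f_k(Q)$ gives you nothing about $f_k(P)$. The antistar of $v$ is a polytopal complex that is not the boundary of any polytope, so no polytope lower bound theorem applies to it directly. The paper avoids this entirely: it fixes a nonsimple vertex $v$ of maximum degree and a facet $F$ \emph{not containing} $v$ with as many vertices as possible, writes $f_k(P)\ge f_k(F)+\#\{k\text{-faces meeting }V(P)\setminus V(F)\}$, bounds the first term by induction (or \cref{thm:at-most-2d}, or \cref{thm:2dplus1}) and the second from below by Xue's counting device (\cref{prop:number-faces-outside-facet}), then runs a case analysis on $f_0(F)$. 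Your idea of invoking \cref{lem:dplus2facets} for exceptional facets is indeed what the paper does, but it enters through $F$, not through the vertex figure.

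Two smaller points. For part (i) you do not need any induction on vertex figures: a $d$-polytope with $d+2$ vertices is completely classified (\cref{lem:dplus2vertices}) as $T_m^{d,d-a}$, and the paper's \cref{thm:dplus2-vertices} is just a comparison within this family using Gr\"unbaum's monotonicity inequalities (\cref{lem:dplus2-vertices-inequalities}); also, the unique minimiser $T_1^{d,d-\ell}$ is the $(d-\ell)$-fold pyramid over $T(1)\oplus T(\ell-1)$, not an $(\ell-2)$-fold pyramid. Finally, your equality analysis inherits the flaw of the decomposition; the paper instead first proves that nonpyramidal minimisers have exactly $d+3$ facets (\cref{lem:facets-minimisers}) and then identifies the minimisers facet by facet using the explicit structure of $\TA(d)$, $Z(d)$ and $\WP$, without Gale diagrams except in the low-dimensional base cases.
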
}

A full version of \cref{thm:at-most-2d-refined-short}, describing the minimising polytopes in detail, is provided at the beginning of \cref{sec:proofs}.

Since this expression will appear frequently, it is convenient to define 
\begin{equation*}
\zeta_k(d+s,d):=\phi_k(d+s,d)+\binom{d-1}{k}-\binom{d+1-s}{k}.
\end{equation*}


\section{Minimisers}
\label{sec:minimisers}
This section presents the minimisers in \cref{thm:at-most-2d,thm:at-most-2d-refined-short}.

When $k\in [1\ldots d-2]$, each minimiser in \cref{thm:at-most-2d} 
is a $(d-s)$-fold pyramid over a simplicial $s$-prism for $s\in [1\ldots d]$; this polytope is called the \textit{$(s,d-s)$-triplex} and denoted by $M(s,d-s)$. Triplices were introduced by Pineda-Villavicencio et.~al \cite[Sec.~3]{PinUgoYos15}. Each triplex has $d+2$ facets.  McMullen and Shephard \cite[Sec.~3]{McMShe70} provided expressions for  the  number of faces of such polytopes.

\begin{lemma}[McMullen and Shephard 1970]\label{lem:dplus2facets}
Let  $P$ be a $d$-dimensional polytope with $d+2$ facets, where $d\ge 2$. Then, there exist  integers $2\le a\le d$ and $1\le m\le \floor{a/2}$ such that $P$ is a $(d-a)$-fold pyramid over $T(m)\times T(a-m)$. The number of  $k$-faces of $P$ is

\begin{equation}\label{eq:dplus2facets}
\binom{d+2}{k+2} -\binom{d-a+m+1}{k+2}-\binom{d+1-m}{k+2} +\binom{d-a+1}{k+2}.
\end{equation}
In particular,  $f_{0}(P)=d+1+m(a-m)$.
\end{lemma}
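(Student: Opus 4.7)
The plan is to split the argument into a structural classification of $P$ and a face-counting computation on the resulting canonical form.

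For the structural classification, I would dualize. Since $P$ has $d+2$ facets, the dual $P^*$ is a $d$-polytope with exactly $d+2$ vertices, and its Gale transform is a one-dimensional vector configuration: a multiset of $d+2$ signed reals summing to zero. Writing $p_+$, $p_-$, and $p_0$ for the numbers of Gale points that are strictly positive, strictly negative, and equal to zero respectively, we have $p_+ + p_- + p_0 = d+2$. Standard Gale-diagram theory (see, e.g., Chapter~6 of Gr\"unbaum's \emph{Convex Polytopes}) identifies $P^*$ as a $p_0$-fold pyramid over the polytope obtained by discarding the zero Gale points, and this base polytope is precisely the free sum of two simplices $T(p_+ - 1) \oplus T(p_- - 1)$. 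Setting $a := p_+ + p_- - 2$ and $m := \min(p_+, p_-) - 1$ then gives $2 \le a \le d$ and $1 \le m \le \lfloor a/2 \rfloor$. Dualizing and using $(\pyr Q)^* = \pyr(Q^*)$, $(Q_1 \oplus Q_2)^* = Q_1^* \times Q_2^*$, together with the self-duality of simplices, we recover $P \cong \pyr^{d-a}(T(m) \times T(a-m))$.

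For the face-number formula, I would combine the product-of-simplices face count with the iterated-pyramid identity. A direct enumeration of faces in a product, followed by Vandermonde's convolution, shows that
\[
f_j(T(m) \times T(a-m)) = \sum_{r+s=j} \binom{m+1}{r+1}\binom{a-m+1}{s+1} = \binom{a+2}{j+2} - \binom{m+1}{j+2} - \binom{a-m+1}{j+2}
\]
for every $j \ge 0$. The iterated pyramid identity reads $f_k(\pyr^{d-a} Q) = \sum_{i=0}^{d-a} \binom{d-a}{i} f_{k-i}(Q)$, with the conventions $f_{-1}(Q) = 1$ and $f_j(Q) = 0$ for $j \le -2$. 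Substituting the closed form for $f_{k-i}(Q)$ and applying Vandermonde term by term yields the three binomials $\binom{d+2}{k+2}$, $\binom{d-a+m+1}{k+2}$, and $\binom{d+1-m}{k+2}$. A subtle point is that the closed form above evaluates to $0$ at $j=-1$ and to $-1$ at $j=-2$ rather than the correct values $1$ and $0$; these two unit discrepancies contribute $\binom{d-a}{k+1} + \binom{d-a}{k+2}$, which by Pascal's rule collapses to $\binom{d-a+1}{k+2}$, producing the final summand of \eqref{eq:dplus2facets}.

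The vertex-count assertion $f_0(P) = d+1+m(a-m)$ follows either by specializing the formula to $k=0$ or, more transparently, by counting vertices directly: $P$ has $(m+1)(a-m+1) = m(a-m)+a+1$ base vertices together with $d-a$ apex vertices.

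The main obstacle is the structural classification via Gale diagrams, which is where the nontrivial geometric content resides. Once the canonical form $\pyr^{d-a}(T(m) \times T(a-m))$ is in hand, the face count becomes a routine if fiddly double Vandermonde manipulation; the one trap to watch is the accurate accounting of the $f_{-1}$ and $f_{-2}$ boundary corrections, which is exactly what turns the naive $\binom{d-a}{k+2}$ contribution into the $\binom{d-a+1}{k+2}$ of the final formula.
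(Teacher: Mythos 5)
Your proposal is correct. Note, however, that the paper offers no proof of this lemma at all: it is quoted as a known result of McMullen and Shephard \cite[Sec.~3]{McMShe70}, with the dual classification (a $d$-polytope with $d+2$ vertices is a multifold pyramid over a free sum of two simplices) likewise quoted from Gr\"unbaum \cite[Sec.~6.1]{Gru03} as \cref{lem:dplus2vertices}. So there is nothing in the paper to compare against; what you have written is a self-contained derivation along the standard lines of those sources. Both halves check out. The Gale-diagram classification is sound: the polytope condition forces $p_+,p_-\ge 2$, which is exactly what yields $1\le m\le \lfloor a/2\rfloor$ and $2\le a\le d$, and the duality identities $(\pyr Q)^*=\pyr(Q^*)$ and $(Q_1\oplus Q_2)^*=Q_1^*\times Q_2^*$ convert the vertex-side statement into the facet-side one. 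The counting half is also right, including the delicate boundary bookkeeping: the closed form $\binom{a+2}{j+2}-\binom{m+1}{j+2}-\binom{a-m+1}{j+2}$ indeed returns $0$ at $j=-1$ and $-1$ at $j=-2$ instead of $1$ and $0$, and the two unit corrections weighted by $\binom{d-a}{k+1}$ and $\binom{d-a}{k+2}$ combine via Pascal to the term $\binom{d-a+1}{k+2}$ in \eqref{eq:dplus2facets}; the three main binomials come from Vandermonde exactly as you describe, and spot checks (square pyramid, triangular prism) confirm the result. The vertex count $(m+1)(a-m+1)+(d-a)=d+1+m(a-m)$ is immediate. In short, a complete and correct proof of a statement the paper takes on citation.
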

Recall that the {\it Cartesian product} of a $d$-polytope $P\subset \R^{d}$ and a $d'$-polytope $P'\subset \R^{d'}$ is the Cartesian product of the sets $P$ and $P'$: 
\begin{equation*}
P\times P'=\set*{(p, p')^{t}\in \R^{d+d'}\mid p\in P,\, p'\in P}.
\end{equation*}

Dually to \cref{lem:dplus2facets}, we have a lemma for $d$-polytopes with $d+2$ vertices, which is from Gr\"unbaum~\cite[Sec.~6.1]{Gru03}.  The {\it direct sum} $P\oplus P'$ of a $d$-polytope $P\subset \R^{d}$ and a $d'$-polytope $P'\subset\R^{d'}$ with the origin in their relative interiors is the $(d+d')$-polytope: 
\begin{equation*}
\label{eq:direct-sums}
P\oplus P'=\conv \left(\left\{\cvec{\v p\\\v 0_{d'}}\in \R^{d+d'}\middle|\; \v p\in P\right\}\bigcup \left\{\cvec{\v 0_{d}\\\v p'}\in \R^{d+d'}\middle|\; \v p'\in P'\right\}\right).    
\end{equation*} 

 \begin{lemma}\label{lem:dplus2vertices}
 Let  $P$ be a $d$-dimensional polytope with $d+2$ vertices, where $d\ge 2$. Then, there exist  integers $2\le a\le d$ and $1\le m\le \floor{a/2}$ such that $P$ is a $(d-a)$-fold pyramid over $T(m)\oplus T(a-m)$. The number of  $k$-dimensional faces of $P$ is
 \begin{equation*}
 \binom{d+2}{d-k+1} -\binom{d-a+m+1}{d-k+1}-\binom{d-m+1}{d-k+1} +\binom{d-a+1}{d-k+1}.
 \end{equation*}
 In particular,  $f_{d-1}(P)=d+1+m(a-m)$.
 \end{lemma}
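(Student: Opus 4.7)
The plan is to derive the statement directly from \cref{lem:dplus2facets} by polar duality. If $P$ is a $d$-polytope with $d+2$ vertices, choose an interior point of $P$ as origin and let $P^{\ast}$ denote its polar. Then $P^{\ast}$ is a $d$-polytope with $d+2$ facets, so \cref{lem:dplus2facets} supplies integers $2\le a\le d$ and $1\le m\le \lfloor a/2 \rfloor$ such that $P^{\ast}$ is a $(d-a)$-fold pyramid over $T(m)\times T(a-m)$.

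The next step is to translate this back under polarity. I would use three standard facts: (a) the polar of a $j$-fold pyramid is a $j$-fold pyramid over the polar of its base; (b) the polar of a Cartesian product of polytopes (each containing the origin in its relative interior) is the direct sum of their polars; and (c) the simplex $T(i)$ is self-dual. Combining these, the polar of $\pyr^{d-a}(T(m)\times T(a-m))$ is $\pyr^{d-a}(T(m)\oplus T(a-m))$, which identifies $P$ with the desired polytope up to combinatorial equivalence. Since we only care about the $f$-vector, combinatorial equivalence is enough.

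Finally, for the face numbers I would invoke the duality relation $f_k(P)=f_{d-1-k}(P^{\ast})$ and substitute $d-1-k$ in place of $k$ in formula \eqref{eq:dplus2facets}. This replaces each $k+2$ by $d-k+1$, yielding
\[
f_k(P)=\binom{d+2}{d-k+1}-\binom{d-a+m+1}{d-k+1}-\binom{d+1-m}{d-k+1}+\binom{d-a+1}{d-k+1},
\]
exactly as stated. Specialising to $k=d-1$ gives $f_{d-1}(P)=f_0(P^{\ast})=d+1+m(a-m)$ by the last sentence of \cref{lem:dplus2facets}.

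There is no real obstacle: the whole argument is bookkeeping for the polar operation, and the only point one might hesitate on is verifying $(P\times Q)^{\ast}\cong P^{\ast}\oplus Q^{\ast}$, which is standard (see, for instance, the discussion of products and sums in \cite[Sec.~3.2]{Gru03}). If one prefers to avoid citing polarity for products, the same conclusion can be reached by a direct count: a $d$-polytope with $d+2$ vertices is combinatorially a pyramid iterated over a direct sum of two simplices (Grünbaum's classification in \cite[Sec.~6.1]{Gru03}), and then the face numbers follow from inclusion–exclusion on the missing faces of the direct sum.
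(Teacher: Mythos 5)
Your duality argument is exactly the route the paper takes: it states the lemma as the polar dual of \cref{lem:dplus2facets} (citing Gr\"unbaum \cite[Sec.~6.1]{Gru03}), and your substitution $k\mapsto d-1-k$ in \eqref{eq:dplus2facets} together with the facts that polarity sends pyramids to pyramids, products to direct sums, and simplices to simplices is precisely the bookkeeping that justifies that statement. The computation checks out (note $\binom{d-m+1}{d-k+1}=\binom{d+1-m}{d-k+1}$), so the proposal is correct and essentially identical in approach.
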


As Gr{{\"u}}nbaum~\cite[Thm.~6.4]{Gru03}, we denote the $(d-a)$-fold pyramid over $T(m)\oplus T(a-m))$ by $T_{m}^{d,d-a}$.  
    


 Gr{{\"u}}nbaum~\cite[p.~101]{Gru03} established inequalities for the number of $k$-faces in $d$-polytopes with $d+2$ vertices.

\begin{lemma} For $k\in [0\ldots d-1]$, the following hold:
\begin{enumerate}[{\rm (i)}]
\item If $2\le a\le d$ and $1\le m\le \floor{a/2}-1$, then  $f_{k}(T_{m}^{d,d-a})\le f_{k}(T_{m+1}^{d,d-a})$, with strict inequality if and only if $m\le k$.
\item If $2\le a\le d-1$ and $1\le m\le \floor{a/2}$, then  $f_{k}(T_{m}^{d,d-a})\le f_{k}(T_{m}^{d,d-(a+1)})$, with strict inequality if and only if $a-m\le k$.
\end{enumerate}
\label{lem:dplus2-vertices-inequalities}
\end{lemma}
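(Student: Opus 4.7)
The plan is to subtract the face-count formula in \cref{lem:dplus2vertices} from itself, with parameters shifted as indicated, and collapse the result via Pascal's identity $\binom{n+1}{j}-\binom{n}{j}=\binom{n}{j-1}$. Of the four binomials in that formula, only two depend on $m$ and a different two depend on $a$, so each of the two desired differences reduces to a comparison of just two binomial coefficients.

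For part (i), a direct computation using Pascal's identity twice gives
\[
f_k(T_{m+1}^{d,d-a})-f_k(T_{m}^{d,d-a})=\binom{d-m}{d-k}-\binom{d-a+m+1}{d-k}.
\]
The hypothesis $m\le\lfloor a/2\rfloor-1$ forces $a\ge 2m+2$, whence $d-m>d-a+m+1$; in particular the right-hand side is nonnegative. For the strictness condition, I would split on the sign of $k-m$. If $k<m$, then $d-k$ strictly exceeds both upper indices (using $m\le a-m-1$), so both binomials vanish and equality holds. If $k\ge m$, the first binomial is at least $1$ because $d-k\le d-m$, and it is strictly larger than the second: either the second is zero (when $d-k>d-a+m+1$) or both are positive, in which case strict monotonicity of $\binom{\cdot}{d-k}$ in the upper index (valid since $d-k\ge 1$ thanks to $k\le d-1$) combined with $d-m>d-a+m+1$ yields the strict inequality.

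For part (ii), the analogous computation yields
\[
f_k(T_{m}^{d,d-(a+1)})-f_k(T_{m}^{d,d-a})=\binom{d-a+m}{d-k}-\binom{d-a}{d-k}\ \ge\ 0,
\]
with strict inequality precisely when $d-k\le d-a+m$ and $d-k\ge 1$, i.e., when $a-m\le k\le d-1$. Since we are working in the range $k\in[0\ldots d-1]$, this is exactly the claimed condition $a-m\le k$.

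There is no serious obstacle: the argument amounts to one application of Pascal's identity per part, followed by a routine case analysis to track vanishing binomials. The only delicate point is the boundary behaviour when the lower index $d-k$ falls above or below the two relevant upper indices, which dictates the equality cases; once these are tabulated, the two monotonicity statements fall out immediately.
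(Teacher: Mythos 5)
Your proof is correct. The paper itself offers no argument for this lemma --- it is quoted from Gr\"unbaum \cite[p.~101]{Gru03} without proof --- so there is nothing to diverge from; your computation supplies a self-contained verification directly from the face-count formula of \cref{lem:dplus2vertices}. The two displayed differences are right: subtracting the formulas and applying Pascal's identity to the two $m$-dependent (respectively $a$-dependent) terms gives
\[
f_k(T_{m+1}^{d,d-a})-f_k(T_{m}^{d,d-a})=\binom{d-m}{d-k}-\binom{d-a+m+1}{d-k},
\qquad
f_k(T_{m}^{d,d-(a+1)})-f_k(T_{m}^{d,d-a})=\binom{d-a+m}{d-k}-\binom{d-a}{d-k},
\]
and in each case the hypothesis ($a\ge 2m+2$, respectively $m\ge 1$) makes the first upper index strictly larger, while the case analysis on whether $d-k$ exceeds the upper indices correctly recovers the equality thresholds $m\le k$ and $a-m\le k$ (using $d-k\ge 1$, guaranteed by $k\le d-1$, for the strict monotonicity of $\binom{\,\cdot\,}{d-k}$ in the upper index).
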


 

%

\green{A vertex $v$ in a $d$-polytope $P$ is \textit{simple} if it is contained in exactly  $d$ edges. Otherwise, it is \textit{nonsimple}. A nonsimple vertex in $P$ may be simple in a proper face of  $P$, and we often need to make this distinction. If $K$ is a closed halfspace in $\mathbb{R}^d$ such that $v$ is the only vertex of $P$ not in $K$, we say that a polytope $P'$ is obtained by \textit{truncating $v$} if $P'=P\cap K$.}

If we truncate a simple vertex from the $(2,d-2)$-triplex ($d\ge2$), we obtain a $d$-polytope $\Pm(d)$ whose number of faces is 
\begin{equation}
f_k(\Pm(d))=\begin{cases}
2d+1&\text{if $k=0$};\\
\binom{d+1}{k+1}+\binom{d}{k+1}+\binom{d-1}{k}& \text{if $k\in [1\ldots d-1]$}.
\end{cases}
\label{eq:pentasm-function}
\end{equation}
Pineda-Villavicencio and Yost~\cite{PinYos22}, also \cite{PinUgoYos16a}, called the $d$-polytope $\Pm(d)$ the \textit{$d$-pentasm}.

\begin{theorem}[{\cite[Thm.~25]{PinYos22}}, {\cite[Thm.~5.1]{Xue22}}]
\label{thm:2dplus1}
Let $d\ge 4$, $P$  a $d$-polytope with \textbf{at least} $2d+1$ vertices, and $k\in [1\ldots d-2]$. 
\begin{enumerate}[{\rm (i)}]
\item  If $P$ has at least $d+3$ facets, then $f_{k}(P)\ge f_{k}(\Pm(d))$, with equality for some $k\in [1\ldots d-2]$ only if  $P=\Pm(d)$.
\item  If $P$ has  $d+2$ facets, then $f_{k}(P)\ge f_{k}((T_{2}^{d,d-(\floor{d/2}+2)})^{*})$. If $d$ is even  then $(T_{2}^{d,d-(\floor{d/2}+2)})^{*}$ has $2d+1$ vertices and this bound can be attained.  
\end{enumerate}
\end{theorem}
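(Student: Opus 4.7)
The plan is to handle the two parts separately: part (ii) reduces quickly to duality, while part (i) requires a finer structural argument.

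For part (ii), I dualise. The polar $P^*$ is a $d$-polytope with $d+2$ vertices, so by \cref{lem:dplus2vertices} it has the form $T_m^{d,d-a}$ for some $2\le a\le d$ and $1\le m\le \floor{a/2}$, with $f_{d-1}(P^*)=d+1+m(a-m)$. The hypothesis $f_0(P)\ge 2d+1$ translates to $m(a-m)\ge d$, which already forces $m\ge 2$. By \cref{lem:dplus2-vertices-inequalities}, the quantity $f_{d-1-k}(T_m^{d,d-a})$ is weakly increasing in both $m$ and $a$, so the minimum of $f_k(P)=f_{d-1-k}(P^*)$ over the feasible region is attained at the smallest admissible pair, namely $m=2$ and $a=\floor{d/2}+2$. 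This pair satisfies $m(a-m)\ge d$ precisely when $d$ is even, in which case the corresponding dual has exactly $2d+1$ vertices and the bound is sharp; for $d$ odd, monotonicity still yields the stated inequality, but it is no longer attained.

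For part (i), the plan is to induct on the pair (number of vertices, number of facets), with base case $P=\Pm(d)$. The key observation is that $\Pm(d)$ is obtained from the $(2,d-2)$-triplex $M(2,d-2)$ by truncating a single simple vertex, so it has precisely $2d+1$ vertices and $d+3$ facets, with $f_k$ given by~\eqref{eq:pentasm-function}. Given $P$ with at least $2d+1$ vertices and at least $d+3$ facets, I would first argue that increasing either count strictly increases $f_k$ (by combining \cref{thm:at-most-2d} applied to facets with an incidence count), reducing the problem to the extremal case $f_0(P)=2d+1$ and $f_{d-1}(P)=d+3$. In this extremal case, I would select a facet $F$ of $P$ carrying few vertices, apply \cref{thm:at-most-2d} to $F$ as a $(d-1)$-polytope with at most $2(d-1)$ vertices, and rebuild $f_k(P)$ from $f_{k-1}(F)$ via standard incidence counting; alternatively, one could appeal directly to the Gale-diagram classification of $d$-polytopes with $d+3$ facets and verify the bound on each combinatorial type.

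The main obstacle is uniqueness: equality at some $k\in[1\ldots d-2]$ must propagate through every reduction step to force the selected facet $F$ to be the $(1,d-2)$-triplex and the global structure of $P$ to be exactly the truncation defining $\Pm(d)$. A secondary challenge is the case in which $P$ has no simple vertex, so that every vertex has degree at least $d+1$ and $f_1(P)\ge (d+1)(2d+1)/2$. While this easily exceeds $f_1(\Pm(d))=\binom{d+1}{2}+\binom{d}{2}+(d-1)$, transporting the excess to intermediate $k$ requires either a rigidity-type inequality in the spirit of Kalai or a direct combinatorial verification based on the Gale-diagram classification.
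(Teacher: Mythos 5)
This statement is quoted from the literature (\cite[Thm.~25]{PinYos22} and \cite[Thm.~5.1]{Xue22}); the paper gives no proof of it, so there is no internal argument to compare yours against. Judged on its own terms, your proposal has genuine gaps in both parts.

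In part (ii), the dualisation and the deduction that $m(a-m)\ge d$ forces $m\ge 2$ are correct, but the minimisation step fails. \cref{lem:dplus2-vertices-inequalities} only compares $T_{m}^{d,d-a}$ with $T_{m+1}^{d,d-a}$ (same $a$) or $T_{m}^{d,d-a}$ with $T_{m}^{d,d-(a+1)}$ (same $m$). The feasible region $\set{(m,a): 2\le m\le\floor{a/2},\ m(a-m)\ge d}$ is not a product set and has several minimal elements under the componentwise order: for instance, for $d=12$ both $(m,a)=(2,8)$ and $(m,a)=(3,7)$ are feasible and neither dominates the other, so monotonicity alone cannot decide between $T_{2}^{12,4}$ and $T_{3}^{12,5}$. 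That comparison needs the explicit formula of \cref{lem:dplus2vertices} (or \cref{lem:dplus2facets}), and it is precisely where the content of part (ii) lies. Moreover, for odd $d$ the pair $(2,\floor{d/2}+2)$ you name as ``the smallest admissible pair'' is not admissible at all (its dual has only $2d$ vertices), so the claim that the minimum is ``attained'' there is internally inconsistent with your own feasibility constraint; the correct statement for odd $d$ requires showing that every feasible polytope still dominates this infeasible reference point.

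In part (i), what you have written is a plan rather than a proof. The assertion that ``increasing either count strictly increases $f_k$'' is essentially the theorem itself and is not justified by citing \cref{thm:at-most-2d} plus ``an incidence count''; likewise ``rebuild $f_k(P)$ from $f_{k-1}(F)$ via standard incidence counting'' is exactly the hard step (in the actual proofs this is done with the machinery of \cref{prop:number-faces-outside-facet}, a careful choice of a facet missing a nonsimple vertex of maximum degree, and a long case analysis on the size and facet count of that facet, as the proof of \cref{thm:strong-minimiser} in this paper illustrates for the analogous situation). You also flag yourself that the uniqueness claim and the case with no simple vertex are unresolved. The alternative you mention, a Gale-diagram classification of $d$-polytopes with $d+3$ facets, is a classification of duals of polytopes with $d+3$ vertices and does not control the number of vertices of $P$, so it does not obviously terminate in a verification of finitely many types for each $d$. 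As it stands, neither part is established.
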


Since it appears frequently, it will be convenient to denote the $a$-fold pyramid over a $b$-pentasm, by $\Pm(b,a)$. We will denote as usual the pyramid over an arbitrary polytope $P$ by $\pyr(P)$ and the $t$-fold pyramid over $P$ by $\pyr_t(P)$. 

The number of $k$-faces in the polytope $\Pm(s-1, d+1-s)$ is obtained by counting the $k$-faces among the $d+1-s$ apices of the pyramid and the $k$-faces obtained from an $i$-face in an $(s-1)$-pentasm and $k-i$ vertices from the $d+1-s$ apices of the pyramid, \green{$0 \le i \le k$}. That is,  for $k\in [0\ldots d-1]$ we have that
\begin{align*}
f_k(\Pm(s-1, d+1-s))&=\binom{d+1-s}{k+1}+(2s-1)\binom{d+1-s}{k} \\
 &\quad+\sum_{i=1}^{k}\binom{d+1-s}{k-i} \left[\binom{s}{i+1}+\binom{s-1}{i+1}+\binom{s-2}{i} \right].
\end{align*}

We write this expression in terms of the function $\phi_{k}(d+s,d)$ defined in the abstract. 
\begin{align*}
\binom{d+1}{k+1}&=\binom{d+1-s}{k+1}+s\binom{d+1-s}{k}+\sum_{i=1}^{k}\binom{d+1-s}{k-i} \binom{s}{i+1}\\
\binom{d}{k+1}-\binom{d+1-s}{k+1}&=(s-1)\binom{d+1-s}{k}+\sum_{i=1}^{k}\binom{d+1-s}{k-i} \binom{s-1}{i+1}\\
\binom{d-1}{k}-\binom{d+1-s}{k}&=\sum_{i=1}^{k}\binom{d+1-s}{k-i} \binom{s-2}{i}
\end{align*}
Comparing these expressions with the definition of $\zeta_k$, we arrive at the following relation.   
\begin{equation}\label{eq:pyramid-pentasm-function}
f_k(\Pm(s-1, d+1-s)) =\zeta_k(d+s,d).
\end{equation}

  The wedge construction will be very useful to us. Let $P$ be a $d$-polytope embedded in the hyperplane $x_{d+1}=0$ of $\R^{d+1}$. Let $F$ be a proper face of $P$, and let $C$ be the halfcylinder $P\times [0,\infty)\subset \R^{d+1}$. We cut the halfcylinder with a hyperplane $H'$ through $F\times \set{0}$ so that  $C$ is partitioned into two parts, one bounded and  one unbounded. The \textit{wedge} of $P$ at $F$ is  the bounded part; it is denoted by $\wed_{F}(P)$. The sets $P$ and $H'\cap C$, the \textit{bases} of $\wed_{F} (P)$, define facets of $\wed_{F} (P)$ that are combinatorially isomorphic to $P$ and intersect at the face $F\times \set{0}$. The wedge $W$  over a $d$-polytope $P\times \set{0}\subseteq \R^{d+1}$ at a face $F\times \set{0}$ of $P\times \set{0}$ is combinatorially isomorphic to a prism $Q$ over $P\times \set{0}$ where the face prism $(F\times \set{0})$ of $Q$ has collapsed into $F\times \set{0}$. \green{For further information, refer to \cite[Sec.~2.6]{Pin24}}. The following basic facts about wedges will be used several times. Their proofs are routine.

\begin{lemma}\label{wedge}
    Let $\wed_{F}(P)$ be a wedge of a $d$-polytope $P$ at a face $F$ of $P$.
    \begin{enumerate}[(i)]
        \item A $k$-face of $\wed_{F}(P)$ is either a $k$-face of one of the bases of $\wed_{F}(P)$,
        or the wedge of a $(k-1)$-face $F'$ of P at the proper face $F\cap F'$, or a prism over a $(k-1)$-face of $P$ disjoint from $F$.
        \item If $F$ is a facet, then for each value of $k\in [0,\ldots,d+1]$ we have
        $$f_k(\wed_{F}(P))=2f_k(P)+f_{k-1}(P)-f_k(F)-f_{k-1}(F).$$
        \item If $W_1,P_1,F_1$ denote pyramids over $\wed_{F}(P),P,F$, respectively, then $W_1$ is combinatorially equivalent to $\wed_{F_1}(P_1)$.
    \end{enumerate}
\end{lemma}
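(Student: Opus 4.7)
The plan throughout is to use the prism-collapse realisation of the wedge: $\wed_F(P)$ is combinatorially the prism $P \times [0,1]$ with the subprism $F \times [0,1]$ collapsed onto $F \times \{0\}$. The faces of $P \times [0,1]$ fall into three families indexed by a face $F'$ of $P$: bottom slices $F' \times \{0\}$, top slices $F' \times \{1\}$, and vertical prisms $F' \times [0,1]$ of dimension $\dim F' + 1$.

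For part (i), I trace each family through the quotient. Every $F' \times \{0\}$ survives as a face of the bottom base. A slice $F' \times \{1\}$ with $F' \subseteq F$ is identified with $F' \times \{0\}$; with $F' \not\subseteq F$ it survives as a face of the top base. A vertical prism $F' \times [0,1]$ with $F' \subseteq F$ also collapses onto $F' \times \{0\}$; with $F' \not\subseteq F$, its intersection $(F \cap F') \times [0,1]$ with the collapsed subprism flattens onto $(F \cap F') \times \{0\}$, so the surviving face is by construction the wedge $\wed_{F \cap F'}(F')$ when $F \cap F' \neq \emptyset$ and the genuine prism over $F'$ when $F \cap F' = \emptyset$. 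This exhausts all cases and yields the three types in (i).

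For part (ii), I combine (i) with the standard identity $f_k(P \times [0,1]) = 2 f_k(P) + f_{k-1}(P)$. The quotient eliminates exactly the $k$-faces $F' \times \{1\}$ with $F' \subseteq F$ and $\dim F' = k$ (there are $f_k(F)$ of these) together with the $k$-faces $F' \times [0,1]$ with $F' \subseteq F$ and $\dim F' = k-1$ (there are $f_{k-1}(F)$ of these), producing the stated formula.

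For part (iii), I induct on $\dim P$ and match face lattices via (i) on both sides. Faces of $\pyr(Q)$ split as faces of $Q$ together with joins of such with the new apex $a$; applied to $Q = \wed_F(P)$ and expanded via (i), this yields one list of face types for $\pyr(\wed_F(P))$. Independently, applying (i) to $\wed_{\pyr(F)}(\pyr(P))$ and using that a face of $\pyr(P)$ is either a face of $P$ or a pyramid over one produces a parallel list. The bottom-base, top-base and their pyramid versions match on the nose; the wedges $\wed_{\pyr(F \cap F')}(\pyr(F'))$ match the pyramids $\pyr(\wed_{F \cap F'}(F'))$ by the inductive hypothesis; and the one slightly subtle case (essentially the $F = \emptyset$ instance of the claim itself) asserts $\wed_{\{a\}}(\pyr(F')) \cong \pyr(F' \times [0,1])$, which is verified by a direct comparison of vertex sets and face posets (a triangle wedged at one of its vertices is a square pyramid, and the general case follows by coning). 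The main obstacle is just keeping this bookkeeping straight across the four or five face types each construction produces; the underlying calculations are entirely elementary.
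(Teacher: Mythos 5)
The paper gives no proof of this lemma, dismissing it as routine, but the prism-collapse description of the wedge stated immediately before the lemma is exactly the realisation your argument is built on, so your proposal is the intended verification. Your case analysis in (i) is exhaustive, the count in (ii) follows correctly from it, and in (iii) you have correctly isolated the one nontrivial identification, namely $\wed_{\{a\}}(\pyr_a(F'))\cong\pyr(F'\times[0,1])$ arising from faces $\pyr_a(F')$ of $P_1$ meeting $F_1$ only in the apex, so the argument is sound.
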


\begin{figure}             
\begin{center}     
\includegraphics[scale=1.2]{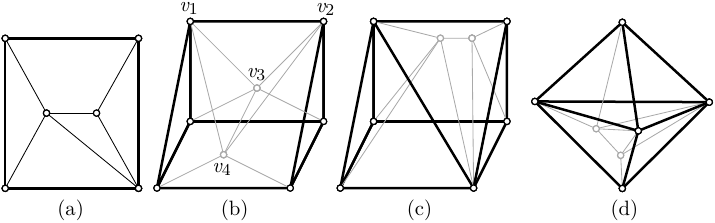}  
\end{center}
\caption{Schlegel digrams of polytopes. {(a)}  The tetragonal antiwedge. {(b)} The 4-polytope $\WP$. (c) The 4-polytope $\TA(4)$. (d) The 4-polytope $Z(4)$.}
\label{fig:polytopes}   
\end{figure} 
We let $\TA(2)$ be a quadrilateral and let $\TA(3)$ be the \textit{tetragonal antiwedge},  the unique nonpyramidal 3-polytope with six vertices and six 2-faces (four triangles and two quadrilaterals); see \cref{fig:polytopes}(a). For $d\ge 4$, we iteratively define  the $d$-polytope $\TA(d)$ as the wedge of $\TA(d-1)$ at a facet $\TA(d-2)$ of $\TA(d-1)$. The polytope $\TA(d)$ has $\TA(d-1)$ as a facet, and thus this construction is sound. See \cref{fig:polytopes}(c). 
\begin{remark}[Structure of $\TA(d)$]
\label{rmk:structure-TA}
\green{The polytope $\TA(d)$ has $2d$ vertices. It has one vertex $v$ with maximum degree $2d-2$, $d-2$ vertices with degree $d+1$, and every other vertex is simple}. Its $d+3$ facets are the following:  
\begin{enumerate}[{\rm (i)}] 
    \item $d-2$ copies of $\TA(d-1)$,
    \item three simplices,
    \item one simplicial $(d-1)$-prism,
    \item one $M(d-2,1)$.
\end{enumerate}
The facets not containing $v$ are the prism and one of the simplices.
\end{remark}

The polytope $\TA(d)$  and a pyramid over a $(d-1)$-pentasm have the same $f$-vector. This follows from noticing that the pyramid over the pentagon (the 2-pentasm) and the tetragonal antiwedge $\TA(3)$ both have $f$-vector $(6,10,6)$ and that a pyramid over a $(d-1)$-pentasm is a wedge of a pyramid over a $(d-2)$-pentasm at a facet that is itself a pyramid over $(d-3)$-pentasm. As a consequence, a $(d-s)$-fold pyramid over the $s$-polytope $\TA(s)$, denoted by $\TA(s,d-s)$, and the $d$-polytope $\Pm(s-1, d+1-s)$ both have the same $f$-vector: 
\begin{equation}\label{eq:pyramid-iterative-wedge-function}
f_k(\TA(s, d-s)) =\zeta_k(d+s,d).
\end{equation}

We will see that, for fixed $s\in[3\ldots d]$, the polytope $\TA(s,d-s)$ and $\Pm(s-1, d+1-s)$ minimise the number of $k$-faces for every $k\in [1\ldots d-2]$. For many values of $s$ and $k$, these are the only two minimisers. But for some values of $s$ and $k$, there may be up to three more.

Consider $s=d=4$. The polytopes $\TA(4)$ and $\Pm(3, 1)$ both have $f$-vector $(8,18,17,7)$. The wedge of a square pyramid at one its edges also has $f$-vector $(8,18,17,7)$; the combinatorial type depends on whether or not the edge contains the apex of the pyramid. If the edge is in the base of the pyramid, we obtain the example called Polytope 2 in \cite[Lemma 18(ii)]{PinUgoYos22};  \green{here we denote this 4-polytope by $\WP$.} If the edge contains the apex, then \cref{wedge} tells us the polytope is a pyramid over the wedge of a square at one of the vertices; we use $\Sigma(3)$  to denote the wedge of a square at one of the vertices\footnote{There are two natural generalisations of $\Sigma(3)$ to higher dimensions, denoted  $\Sigma(d)$ and $C(d)$ in \cite{PinUgoYos15}. They both have $3d-2$ vertices and are not relevant in this paper.}.  Therefore for $s=4$, $\pyr_{d-4}(\WP)$ (the $(d-4)$-fold pyramid over $\WP$), and  $\pyr_{d-3}(\Sigma(3))$ (the $(d-3)$-fold pyramid over $\Sigma(3)$), also minimise the number of $k$-faces for every $k\in [1\ldots d-2]$.

Let $Z(3)$ be a triangular bipyramid and let $Z(2)$ be a triangular face of $Z(3)$. For $d\ge 4$, we define $Z(d)$ inductively by $Z(d+1):=\wed_{Z(d-1)}(Z(d))$.   
\begin{remark}[Structure of $Z(d)$]
\label{rmk:structure-Z} The polytope $Z(d)$ is $d$-dimensional with $2d-1$ vertices and $d+3$ facets. It has two vertices, say $v$ and $w$, with maximum degree $2d-2$,  $d-2$ vertices with degree $d+1$, and $d-1$ simple vertices. The $d+3$ facets are
\begin{enumerate}[{\rm (i)}] 
    \item $d-2$ copies of $Z(d-1)$,
    \item three simplices,
    \item two copies of $M(d-2,1)$.
\end{enumerate}
The facets not containing $v$ are one of the $M(d-2,1)$ copies and one of the simplices. The facets not containing $w$ are the other $M(d-2,1)$ copy and another one of the simplices. See \cref{fig:polytopes}(d).
\end{remark}

The next result says that the polytope $Z(d)$ minimises $f_k$ among $d$-polytopes with $2d-1$ vertices and at least $d+3$ facets only for $k\ge3$.

\begin{lemma} For $d\ge 2$, the $f$-vector of the polytope $Z(d)$ is given by 
\label{lem:fvector-Z}
\begin{equation*}
f_{k}=\begin{cases}
2d-1,&\text{if $k=0$};\\
d^2+d-3=\zeta_1(2d-1,d)+1,&\text{if $k=1$};\\
\binom{d+1}{k+1}+\binom{d}{k+1}+\binom{d-1}{k},&\text{if $k\ge 2$}.
\end{cases}
\end{equation*}
In particular, $f_2(Z(d))=\zeta_2(2d-1,d)+1$, and $f_k(Z(d))=\zeta_k(2d-1,d)$ for $k\ge3$.
\end{lemma}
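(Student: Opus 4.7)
The plan is to proceed by induction on $d$, exploiting the recursive definition $Z(d+1):=\wed_{Z(d-1)}(Z(d))$. The base case is $d=3$, where $Z(3)$ is a triangular bipyramid with $f$-vector $(5,9,6)$; all three expressions in the statement are verified by direct substitution. For the inductive step, the crucial structural fact (recorded in \cref{rmk:structure-Z}) is that $Z(d-1)$ is a facet of $Z(d)$, so \cref{wedge}(ii) gives the recursion
\begin{equation*}
    f_k(Z(d+1))=2f_k(Z(d))+f_{k-1}(Z(d))-f_k(Z(d-1))-f_{k-1}(Z(d-1)).
\end{equation*}

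I would substitute the inductive hypothesis into this recursion and verify the target formula in three regimes. For $k=0$, using $f_{-1}=1$ the recursion collapses to $f_0(Z(d+1))=2(2d-1)-(2d-3)=2d+1$. For $k=1$, plugging the linear expression $d^2+d-3$ for $f_1(Z(d))$, the quadratic term $2d$ for $f_0(Z(d))$, together with the corresponding values for $Z(d-1)$, yields $(d+1)^2+(d+1)-3$ after a short arithmetic check. For $k\ge 2$, the six binomial terms should telescope to $\binom{d+2}{k+1}+\binom{d+1}{k+1}+\binom{d}{k}$ by repeated use of Pascal's identity $\binom{n}{k}+\binom{n}{k-1}=\binom{n+1}{k}$; concretely, the pair $2\binom{d+1}{k+1}+\binom{d+1}{k}$ contributes the leading $\binom{d+2}{k+1}+\binom{d+1}{k+1}$, and the remaining terms collapse stepwise to $\binom{d}{k}$. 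The $k=2$ case deserves slight care, since the hypothesis for $f_1$ is a quadratic (not a sum of three binomials); tracking the constant carefully shows that this does not disturb the target.

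To close, I would verify the relationship with $\zeta_k(2d-1,d)$ by substituting $s=d-1$ into its definition, giving
\begin{equation*}
    \zeta_k(2d-1,d)=\binom{d+1}{k+1}+\binom{d}{k+1}+\binom{d-1}{k}-\binom{2}{k+1}-\binom{2}{k}.
\end{equation*}
For $k\ge 3$ the two correction terms vanish, yielding $f_k(Z(d))=\zeta_k(2d-1,d)$; for $k=2$, the correction is $-\binom{2}{2}=-1$, matching $f_2(Z(d))=\zeta_2(2d-1,d)+1$; for $k=1$, the correction is $-\binom{2}{2}-\binom{2}{1}=-3$, and a direct comparison with $d^2+d-3$ confirms the claimed $+1$ shift.

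The work is almost entirely bookkeeping: no new structural ideas beyond the wedge face-counting formula and Pascal's identity are needed. The main obstacle, such as it is, lies in the regime boundaries of the target formula (the three cases $k=0$, $k=1$, $k\ge 2$ and the separate $+1$ offset for $\zeta_2$ versus $\zeta_k$ with $k\ge 3$): one must align these with the recursion so that the inductive step treats each regime consistently, particularly the $k=2$ case where the hypothesis for $f_{k-1}(Z(d))$ has a different form than for higher $k$.
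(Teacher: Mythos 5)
Your proposal is correct and follows essentially the same route as the paper: induction on $d$ via the recursion $f_k(Z(d+1))=2f_k(Z(d))+f_{k-1}(Z(d))-f_k(Z(d-1))-f_{k-1}(Z(d-1))$ from \cref{wedge}(ii), telescoping with Pascal's identity for the higher $k$, and comparing with $\zeta_k(2d-1,d)$ by substituting $s=d-1$. The only cosmetic differences are that the paper runs separate inductions for $f_1$ and $f_2$ (whereas you fold $k=2$ into the general telescoping, correctly observing that the constant $-3$ in $f_1(Z(d))=d^2+d-3$ cancels in the difference $f_1(Z(d))-f_1(Z(d-1))$), and that the two-step recursion formally requires two consecutive base cases ($d=2$ and $d=3$, or $d=3$ and $d=4$ as in the paper), not just $d=3$.
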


\begin{proof}
Since $Z(3)$ has exactly two more vertices than $Z(2)$, the expression for $f_{0}$ is clear. An induction argument on $d\ge 3$ establishes that $f_1(Z(d))=(d-2)^2+5(d-2)+3$. Similarly, the expression  $f_2(Z(d))=\frac{1}{3}(d^3-4d+3)$ can be proved by induction on $d$, starting  with the base cases $d=2,3$; the induction step uses the expression for $f_{1}$ and \cref{wedge}. 

For computing $f_{k}$ for $k\ge3$, we  proceed again by induction on $d\ge 3$ for all $k$, with $d=4,3$ as the base cases. Suppose the conclusion is valid up to dimension $d$. Repeated applications of Pascal's identity yield
    \begin{align*}
f_k(Z(d+1))&=2f_k(Z(d))-f_k(Z(d-1))+f_{k-1}(Z(d))-f_{k-1}(Z(d-1))\\
&=2\binom{d+1}{k+1}+2\binom{d}{k+1}+2\binom{d-1}{k}-\binom{d}{k+1}-\binom{d-1}{k+1}-\binom{d-2}{k}\\
&\qquad+\binom{d+1}{k}+\binom{d}{k}+\binom{d-1}{k-1}-\binom{d}{k}-\binom{d-1}{k}-\binom{d-2}{k-1}\\
&=2\binom{d+1}{k+1}+\binom{d+1}{k}+\binom{d}{k+1}-\binom{d-1}{k+1}+\\
&\qquad+\binom{d-1}{k}-\binom{d-2}{k}+\binom{d-1}{k-1}-\binom{d-2}{k-1}\\
&=2\binom{d+1}{k+1}+\binom{d+1}{k}+\binom{d-1}{k}+\binom{d-2}{k-1}+\binom{d-2}{k-2}\\
&=\binom{d+1}{k+1}+\binom{d+2}{k+1}+\binom{d-1}{k}+\binom{d-1}{k-1}\\
 &=\binom{d+2}{k+1}+\binom{d+1}{k+1}+\binom{d}{k}.
 \end{align*}
Hence, the conclusion is also valid in dimension $d+1$.
   \end{proof}

We are also interested in the $(d-s-1)$-fold pyramid over $Z(s+1)$, denoted by $Z(s+1,d-s-1)$, which has $d+s$ vertices and $d+3$ facets. 
If $P$ is a pyramid over a facet $F$, then $f_k(P)=f_k(F)+f_{k-1}(F)$ for each $k\ge 0$. Iteration of this leads to the  following conclusion for the $(d-s-1)$-fold pyramid over $Z(s+1)$: 

\begin{enumerate}[{\rm (i)}] 
\item If $s<d-1$, then the expression for $f_k$ only involves  terms of the form $f_j$ with $3\le j\le k$, leading to $f_k(Z(s+1,d-s-1))=\zeta_k(d+s,d)$ for $k\ge d-s+2$. 

\item However, for $k\le d-s+1$, we have $f_k(Z(s+1,d-s-1))=\zeta_k(d+s,d)+1$. \end{enumerate}
   Thus, $Z(s+1,d-s-1)$ minimises the number of $k$-faces among $d$-polytopes with $d+s$ ($3\le s \le d$) vertices only for $k\in[d-s+2,d-1]$.

\section{Proof of the main theorem}
\label{sec:proofs}

We list the full version of the main theorem.

\green{\begin{theorem}[Refined theorem for $d$-polytopes with at most $2d$ vertices]
\label{thm:at-most-2d-refined}  
Given parameters $d\ge 3$ and $2\le s \le d$, and a  $d$-polytope $P$ with $d+s$ vertices, the following statements hold: 
\begin{enumerate} [{\rm (i)}]
    \item If $s=2$, then  $P$ has the form $T_{m}^{d,d-a}$ for some $2\le a\le d$ and $1\le m\le \floor{a/2}$. If $P$ has at least $d+\ell$ facets where $\ell\in [3\ldots d]$, then, for all $k\in[1\ldots d-2]$,  
\begin{equation*}
f_k(P)\ge \phi_k(d+2,d)+\binom{d-1}{k-1}-\binom{d-\ell+1}{k-\ell+1}.
\end{equation*} 
Moreover, if $m=1$ and $P\ne T_{1}^{d,d-\ell}$, then $f_k(P) >f_k(T_{1}^{d,d-\ell})$, for each such $\ell$ and $k\ge \ell-1$. If $m\ge 2$, then $f_k(P) >f_k(T_{1}^{d,d-\ell})$, for each $k\in [1\ldots d-2]$.
    \item If $s\in [3\ldots d]$ and $P$ has at least $d+3$ facets, then $f_k(P)\ge\zeta_k(d+s,d)$. Additionally, if $k\in[1\ldots d-2]$ is fixed, and $P$ is a $d$-polytope whose number of $k$-faces equals $\zeta_k(d+s,d)$, then either
\begin{enumerate}
\item $P$ is 
a $(d-s+1)$-fold pyramid over a $(s-1)$-pentasm, or
\item $P$ is a  $(d-s)$-fold pyramid over $\TA(s)$, or 
\item $s<d$, $k\ge d-s+2$, and $P$ is a $(d-s-1)$-fold pyramid over $Z(s+1)$, or 
\item $s=4$, and $P$ is a $(d-3)$-fold pyramid over $\Sigma(3)$, 
\item $s=4$, and $P$ is a $(d-4)$-fold pyramid over $\WP$. 
\end{enumerate}
\end{enumerate}
\end{theorem}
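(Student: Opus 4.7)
I would split the argument along the two parts of the theorem.

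For part (i), since $P$ has $d+2$ vertices, \cref{lem:dplus2vertices} forces $P=T_m^{d,d-a}$ for some $2\le a\le d$ and $1\le m\le \floor{a/2}$, and the facet hypothesis $f_{d-1}(P)\ge d+\ell$ translates to $m(a-m)\ge \ell-1$. My plan is to apply the monotonicities of \cref{lem:dplus2-vertices-inequalities} to show that within the subfamily $m=1$ the minimum of $f_k$ is uniquely attained at $T_1^{d,d-\ell}$ whenever $k\ge \ell-1$, and that every $T_m^{d,d-a}$ with $m\ge 2$ strictly exceeds $f_k(T_1^{d,d-\ell})$ for all $k\in[1,d-2]$. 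The explicit closed form $\phi_k(d+2,d)+\binom{d-1}{k-1}-\binom{d-\ell+1}{k-\ell+1}$ then follows from a direct expansion of the formula in \cref{lem:dplus2vertices} at $T_1^{d,d-\ell}$, where the difference of binomials appears as a telescoping sum.

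For part (ii), I would induct on $d$, handling all $s\in[3,d]$ simultaneously, with small-dimensional base cases (essentially $d=3,4$) verified directly. The inductive step splits on whether $P$ is a pyramid.

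If $P=\pyr(Q)$, then $Q$ is a $(d-1)$-polytope with $(d-1)+s$ vertices and $f_{d-2}(Q)=f_{d-1}(P)-1\ge d+2=(d-1)+3$ facets. When $s\le d-1$ the inductive hypothesis of part (ii) applies to $Q$; when $s=d$, $Q$ has $2(d-1)+1$ vertices and \cref{thm:2dplus1} applies instead. Using $f_k(P)=f_{k-1}(Q)+f_k(Q)$ together with the Pascal-style identity $\zeta_{k-1}(d-1+s,d-1)+\zeta_k(d-1+s,d-1)=\zeta_k(d+s,d)$, which is straightforward to verify from the definitions, the bound propagates cleanly, and each minimiser for $Q$ lifts to a pyramid over it, yielding items (a)--(e) in exactly the cases in which they are listed (in particular, the $k\ge d-s+2$ restriction in (c) shifts correctly under induction).

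The non-pyramid case is where the main work lies. Here every vertex of $P$ lies off at least one facet. My approach is to pick a vertex $v$ of minimum degree and split $f_k(P)$ into $k$-faces through $v$ and $k$-faces avoiding $v$; the latter are supported on a $(d-1)$-dimensional subcomplex with $d+s-1$ vertices, to which either the inductive hypothesis or \cref{thm:at-most-2d} can be applied, while the former are counted by the vertex figure at $v$, itself a $(d-1)$-polytope whose $(k-1)$-face count admits a Xue-type lower bound. The extra $\binom{d-1}{k}-\binom{d+1-s}{k}$ term in $\zeta_k$ over $\phi_k$ arises precisely from having at least $d+3$ facets rather than $d+2$, through the contribution of the "third" facet not available in Xue's minimisers. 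For the equality analysis, the decomposition forces $P$ to arise as a wedge of a lower-dimensional minimiser at a facet, which by \cref{wedge} and the structure in \cref{rmk:structure-TA,rmk:structure-Z} produces $\TA(s,d-s)$ and, when $s<d$, $Z(s+1,d-s-1)$. The exceptional $s=4$ minimisers $\pyr_{d-4}(\WP)$ and $\pyr_{d-3}(\Sigma(3))$ are the most delicate part of the proof and I would handle them separately by a small finite enumeration of $4$-polytopes with $8$ vertices and at least $7$ facets, since both arise from wedging a square pyramid at an edge and the combinatorial type depends on whether that edge contains the apex.
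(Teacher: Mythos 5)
There are genuine gaps in both parts. In part (i), your claim that the monotonicity inequalities of \cref{lem:dplus2-vertices-inequalities} alone give $f_k(T_m^{d,d-a})>f_k(T_1^{d,d-\ell})$ for all $m\ge 2$ fails when $a<\ell$: the lemma only lets you increase $m$ at fixed $a$, or increase $a$ at fixed $m$, and both moves \emph{increase} $f_k$, so there is no monotone chain from $T_1^{d,d-\ell}$ down to a polytope $T_m^{d,d-a}$ with $a<\ell$ (such polytopes do satisfy the facet hypothesis, e.g.\ $T_2^{d,d-4}$ has $d+5$ facets). The paper's \cref{thm:dplus2-vertices} handles exactly this case by noting that $a<\ell\le d$ forces $P$ to be a pyramid over $T_m^{d-1,d-1-a}$ and running an induction on the dimension; your outline omits this step entirely.

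In part (ii) the pyramid case and the identity $\zeta_{k-1}(d-1+s,d-1)+\zeta_k(d-1+s,d-1)=\zeta_k(d+s,d)$ are fine, but the non-pyramid case --- which is where all the work lies --- does not go through as described. First, the $k$-faces of $P$ avoiding a vertex $v$ are not the faces of a $(d-1)$-polytope with $d+s-1$ vertices, so neither the induction hypothesis nor \cref{thm:at-most-2d} applies to them; at best you can bound them below by $f_k(F)$ for a single facet $F$ missing $v$, and the resulting two-term bound $f_k(F)+f_{k-1}(P/v)$ is strictly smaller than $\zeta_k(d+s,d)$ even for the minimisers themselves (try a simple vertex of $\TA(d)$ with $F$ the prism facet). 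One must instead count the $k$-faces meeting the \emph{whole set} of vertices outside $F$ via \cref{prop:number-faces-outside-facet}, which supplies the indispensable terms $\sum_{i\ge 2}\binom{d-i+1}{k}$. Second, taking $v$ of \emph{minimum} degree is the wrong choice: such a vertex is typically simple and its vertex figure contributes only $\binom{d-1}{k}$; the paper's \cref{thm:strong-minimiser} crucially takes a \emph{nonsimple vertex of maximum degree} together with a largest facet avoiding it, and then splits into cases according to $f_0(F)$ and the facet structure of $F$. Third, the equality analysis is asserted rather than argued: ``the decomposition forces $P$ to arise as a wedge of a lower-dimensional minimiser at a facet'' is unjustified, and is not how the minimisers are identified --- the paper must first prove that a nonpyramidal minimiser has exactly $d+3$ facets (\cref{lem:facets-minimisers}) and then reconstruct the full vertex--facet incidences case by case (\cref{rmk:equality-cases}, \cref{thm:at-most-2d-refined-equality}); this occupies roughly half the paper and is not replaced by the finite enumeration in dimension $4$ that you propose for the $s=4$ exceptions, since those exceptional types must also be excluded as facets of putative minimisers in every higher dimension.
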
}


We start from the beginning, at $d$-polytopes  with $d+2$ vertices. This theorem is a variation of the dual statement in \cref{thm:at-most-2d}. 

\begin{theorem}[{$d+2$ vertices}]
\label{thm:dplus2-vertices}	Let $d\ge 3$, $2\le \ell,a\le d$, and $1\le m\le \floor{a/2}$. Let $P:=T_{m}^{d,d-a}$ be a $d$-polytope  with  \textbf{at least} $d+\ell$ facets,  other than $T_{1}^{d,d-\ell}$.  The following hold:
\begin{enumerate}[{\rm (i)}]
    \item If $m=1$, then $f_k(P) >f_k(T_{1}^{d,d-\ell})$, for each $k\ge \ell-1$.
    \item If $m\ge 2$, then $f_k(P) >f_k(T_{1}^{d,d-\ell})$, for each $k\in [1\ldots d-2]$.
\end{enumerate}
\end{theorem}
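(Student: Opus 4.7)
The proof is a case analysis guided by \cref{lem:dplus2-vertices-inequalities} (Grünbaum's inequalities), with direct computation from \cref{lem:dplus2vertices} reserved for one subcase.

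For part (i) ($m=1$), note that $T_1^{d,d-a}$ has exactly $d+a$ facets, so the hypothesis forces $a \ge \ell$, and the assumption $P \ne T_1^{d,d-\ell}$ upgrades this to $a \ge \ell+1$. Iterating \cref{lem:dplus2-vertices-inequalities}(ii) produces the chain
\[
f_k(T_1^{d,d-\ell}) \le f_k(T_1^{d,d-(\ell+1)}) \le \cdots \le f_k(T_1^{d,d-a}),
\]
where the very first step is strict iff $\ell - 1 \le k$, yielding the conclusion for $k \ge \ell - 1$.

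For part (ii) ($m \ge 2$), the number of facets of $T_m^{d,d-a}$ equals $(m+1)(a-m+1)+(d-a) = m(a-m) + d + 1$, so the hypothesis becomes $m(a-m) \ge \ell - 1$. The plan is to split on whether $a \ge \ell$ or $a < \ell$. When $a \ge \ell$, the chain $T_m^{d,d-a} \to T_1^{d,d-a} \to T_1^{d,d-\ell}$ succeeds: iterating \cref{lem:dplus2-vertices-inequalities}(i) gives $f_k(T_m^{d,d-a}) > f_k(T_1^{d,d-a})$ for every $k \ge 1$ (as $m \ge 2$), while \cref{lem:dplus2-vertices-inequalities}(ii) gives $f_k(T_1^{d,d-a}) \ge f_k(T_1^{d,d-\ell})$, and composing yields the claim throughout $k \in [1\ldots d-2]$.

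The delicate case is $a < \ell$, where the intermediate $T_1^{d,d-a}$ has fewer facets than $T_1^{d,d-\ell}$ and the preceding chain collapses. Here direct computation via \cref{lem:dplus2vertices} seems unavoidable. Setting $j = d-k+1$ and applying Pascal's identity twice, the difference should telescope to
\[
f_k(T_m^{d,d-a}) - f_k(T_1^{d,d-\ell}) = \sum_{s=1}^{a-m-1}\sum_{r=0}^{m-2}\binom{d-s-1-r}{j-2} - \sum_{t=0}^{\ell-a-1}\binom{d-a-t}{j-2}.
\]
The positive double sum contains $(m-1)(a-m-1)$ terms and the negative sum $\ell - a$; the facet condition rearranges as $(m-1)(a-m-1) \ge \ell - a$, so the former has at least as many terms as the latter. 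Because every positive index lies in $[d-a+2,\, d-2]$ while every negative index lies in $[d-\ell+1,\, d-a]$, a term-by-term pairing yields a nonnegative result, and the leading term $\binom{d-2}{j-2}$ (present at $s=1, r=0$) is strictly positive throughout $k \in [1\ldots d-2]$. The main obstacle is the Pascal bookkeeping in this subcase, especially verifying strictness in the tight edge case $(m-1)(a-m-1) = \ell - a$, where no surplus positive terms remain and strictness must be extracted entirely from the leading pair.
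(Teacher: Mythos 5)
Your proof is correct in outline and coincides with the paper's for part (i) and for the $a\ge\ell$ half of part (ii) (the paper runs the chain as $T_1^{d,d-\ell}<T_2^{d,d-\ell}\le T_2^{d,d-a}\le T_m^{d,d-a}$; your order $T_1^{d,d-\ell}\le T_1^{d,d-a}<T_2^{d,d-a}\le T_m^{d,d-a}$ is equivalent and arguably cleaner). Where you genuinely diverge is the subcase $a<\ell$: the paper observes that $a<\ell\le d$ forces $P$ to be a pyramid over $T_m^{d-1,d-1-a}$, and runs an induction on $d$ (with a separate computation for $\ell=d$ and a base case $d=3$), whereas you compute the difference directly from \cref{lem:dplus2vertices}. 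I have checked your telescoping: with $j=d-k+1$ the difference first reduces to $\sum_{i=1}^{m-1}\binom{d-i}{j-1}-\sum_{t=1}^{m}\binom{d-a+t}{j-1}+\binom{d-\ell+1}{j-1}$, and a second application of the identity $\binom{n}{c}-\binom{n-b}{c}=\sum_{r=1}^{b}\binom{n-r}{c-1}$ gives exactly your double sum; the term counts $(m-1)(a-m-1)$ versus $\ell-a$, the inequality $(m-1)(a-m-1)=m(a-m)-a+1\ge\ell-a$, and the index ranges $[d-a+2,d-2]$ versus $[d-\ell+1,d-a]$ are all as you state. Your route buys a self-contained, dimension-uniform argument with no base case and no special treatment of $\ell=d$; the paper's induction is shorter on the page but leans on the pyramid identity $f_k(\pyr(F))=f_k(F)+f_{k-1}(F)$ and a two-tier case split.

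The one step you leave open -- strictness -- does close, but not quite the way you phrase it, so let me pin it down. Saying that $\binom{d-2}{j-2}$ is strictly positive is not sufficient: in the tight case every positive term is consumed by a negative partner, and in the non-tight case the \emph{surplus} positive terms can themselves vanish (the smallest index $d-a+2$ satisfies $\binom{d-a+2}{j-2}=0$ whenever $j>d-a+4$, e.g.\ for $k=1$ and $a\ge5$). The uniform fix is precisely your ``leading pair'': always match the largest positive term $\binom{d-2}{j-2}$ against the smallest negative term $\binom{d-\ell+1}{j-2}$, and match the remaining negative terms injectively into the remaining positive ones (possible by the count, and each such pair is $\ge0$ by the index comparison). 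In this subcase $m\ge2$ and $m\le\floor{a/2}$ force $a\ge4$, hence $\ell\ge5$, so $d-2>d-\ell+1$ and
\begin{equation*}
\binom{d-2}{j-2}-\binom{d-\ell+1}{j-2}=\sum_{t=1}^{\ell-3}\binom{d-2-t}{j-3}>0
\end{equation*}
exactly when $\binom{d-3}{j-3}>0$, i.e.\ $j\le d$, i.e.\ $k\ge1$. This settles both the tight and non-tight cases simultaneously, and your argument is then complete.
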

\begin{proof} 

 If $m=1$, \cref{lem:dplus2vertices} gives $f_{d-1}(P)=d+1+m(a-m)=d+a$. Since $P\ne T_{1}^{d,d-\ell}$, we have that $a>\ell$, in which case \cref{lem:dplus2-vertices-inequalities}(ii) gives that $f_k(P) >f_k(T_{1}^{d,d-\ell})$, for each $k\ge \ell-1$. Henceforth, we assume that $m\ge 2$. 
We prove (ii) by an induction argument on $d\ge 3$ for all $a,\ell\in[2\ldots d]$ and $2\le m\le \floor{a/2}$. The case $d=3$ is simple. We then assume that $d\ge 4$,  $f_{d-1}(P)\ge d+\ell$, and that (ii) holds for all $(d-1)$-polytopes $T_{m'}^{d-1,d-a'-1}$ with  at least $d-1+\ell'$ facets for all $a',\ell'\in[2\ldots d-1]$ and $2\le m'\le \floor{a'/2}$. 

\cref{lem:dplus2-vertices-inequalities}(i) yields that $f_{k}(T_1^{d,d-\ell})<f_{k}(T_2^{d,d-\ell})$, for each $k\ge 1$.
Additionally, when $\ell\le a$, \cref{lem:dplus2-vertices-inequalities} gives that
\begin{equation*}
\label{eq:2dplus2-dplus2-facets-2}
f_{k}(T_2^{d,d-\ell})\le f_{k}(T_2^{d,d-a})\le f_{k}(T_m^{d,d-a}).
\end{equation*}
Hence, for $k\in [1\ldots d-2]$ and $\ell\le a$, we get that $f_{k}(T_1^{d,d-\ell})<f_{k}(T_m^{d,d-a})$. 

We now assume that $d\ge \ell>a$. Hence $P$ is a pyramid with base $F:=T_m^{d-1,d-1-a}$, where $a\in[2\ldots d-1]$ and $m\ge 2$. The induction hypothesis holds for $F$ when $\ell\in [2\ldots d-1]$, since $F$ has $d-1+2$ vertices and at least $d-1+\ell$ $(d-2)$-faces. As a consequence, for each $k\in [1\ldots d-2]$, we find that
 \begin{align*}
 f_{k}(P)&=f_{k}(F)+f_{k-1}(F)\\
 &> f_{k}(T_{1}^{d-1,d-1-\ell})+f_{k-1}(T_{1}^{d-1,d-1-\ell})\\
 &=\brac*{\binom{d+1}{d-k}-\binom{d-\ell+1}{d-k}-\binom{d-1}{d-k}+\binom{d-\ell}{d-k}}\\
 &\quad + \brac*{\binom{d+1}{d-k+1}-\binom{d-\ell+1}{d-k+1}-\binom{d-1}{d-k+1}+\binom{d-\ell}{d-k+1}}\\
 &=f_k(T_{1}^{d,d-\ell}).
 \end{align*}
For the case $f_{d-2}(P)$, we remark that  $f_{d-2}(F)\ge d-1+\ell=f_{d-2}(T_{1}^{d-1,d-1-\ell})$ by assumption and that $f_{d-3}(F)>f_{d-3}(T_{1}^{d-1,d-1-\ell})$ by induction.

Suppose $\ell=d$. Then $f_{d-2}(F)\ge d-1+\ell\ge 2(d-1)$, and thus, for each $k\in [1\ldots d-2]$ the induction hypothesis on $d-1$ for $\ell'= d-1$ gives that 
	 \begin{align*}
 f_{k}(P)&=f_{k}(F)+f_{k-1}(F)\\
 &\ge f_{k}(T_{1}^{d-1,0})+1+f_{k-1}(T_{1}^{d-1,0})+1\\
 &=\brac*{\binom{d+1}{d-k}-\binom{2}{d-k}-\binom{d-1}{d-k}+\binom{1}{d-k}+1}\\
 &\quad + \brac*{\binom{d+1}{d-k+1}-\binom{2}{d-k+1}-\binom{d-1}{d-k+1}+\binom{1}{d-k+1}+1}\\
 &=\binom{d+2}{d-k+1}-\binom{3}{d-k-1}-\binom{d}{d-k+1}+\binom{2}{d-k+1}+2\\
 &>\binom{d+2}{d-k+1}-\binom{1}{d-k}-\binom{d}{d-k+1}=f_k(T_{1}^{d,0}).
 \end{align*}
This completes the proof of the lemma.
\end{proof}





\begin{proposition}[{\cite[Prop.~3.1]{Xue21}}]\label{prop:number-faces-outside-facet}
Let $d\ge 2$ and let $P$ be a $d$-polytope. In addition, suppose that  $r\le d+1$ is given and  that $S:=(v_1,v_2,\ldots,v_r)$ is a sequence of distinct vertices in $P$. Then the following hold.
\begin{enumerate}[{\rm (i)}]
    \item  There is a sequence $F_1, F_2,\ldots, F_r$ of faces of $P$ such that each $F_i$ has dimension $d-i+1$ and contains $v_i$, but does not contain any $v_j$ with $j<i$.
    \item For each $k\ge1$, the number of $k$-faces of $P$ that contain at least one of the vertices in $S$ is bounded from below by 
\begin{equation*}
\sum_{i=1}^{r}f_{k-1}(F_i/v_i)\ge  \sum_{i=1}^{r} \binom{d+1-i}{k}
\end{equation*}
\item In case of equality in (ii) for some $k\in [1\ldots d-2]$, then for any ordering $v_{\ell1},\ldots,v_{\ell r}$ of the vertices of $S$, we must have that the number of $k$-faces containing $v_{\ell i}$ and not containing any vertex $v_{\ell j}$ with $j<i$ is precisely $\binom{d-i+1}{k}$. In particular, every vertex in $S$ is simple.
\end{enumerate}
\end{proposition}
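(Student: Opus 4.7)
The plan is to handle (i), (ii), (iii) in order: a greedy face-shrinking construction for (i), a partition of the $k$-faces by ``first-hit'' vertex for (ii), and an ordering-symmetry argument for (iii).

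For (i), I would construct each $F_i$ by starting from $P$ and iteratively replacing the current face with one of its facets that still contains $v_i$. The key fact is that whenever $G$ is a face of $P$, $v_i$ is a vertex of $G$, and $v_j\in G$ for some $j<i$, then some facet of $G$ contains $v_i$ but not $v_j$; this holds because the intersection of all facets of $G$ through $v_i$ equals the minimal face $\{v_i\}$, so not every such facet can contain $v_j$. Applying this repeatedly eliminates $v_1,\dots,v_{i-1}$ from the current face one at a time, lowering the dimension by at most one per step and leaving a face of dimension at least $d-i+1$; further facets through $v_i$ then trim it to dimension exactly $d-i+1$.

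For (ii), I would partition the $k$-faces of $P$ that meet $S$ according to the least index $i$ such that $v_i$ lies in the face, writing $c_i$ for the block size. Any $k$-face of $F_i$ containing $v_i$ falls into the $i$-th block because $F_i$ omits $v_1,\dots,v_{i-1}$; hence $c_i\ge f_{k-1}(F_i/v_i)$. Since $F_i/v_i$ is a $(d-i)$-polytope, its number of $(k-1)$-faces is at least $\binom{d-i+1}{k}$, the minimum being attained by the simplex. Summing over $i$ yields the displayed bound.

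For (iii), the decisive observation is that both the total $T=\sum_i c_i$ (which is the number of $k$-faces meeting $S$, independent of any ordering) and the lower bound $\sum_i\binom{d-i+1}{k}$ are invariant under reorderings of $S$. Equality in (ii) for one ordering therefore gives $T=\sum_i\binom{d-i+1}{k}$, and combined with the per-block inequality $c_i\ge\binom{d-i+1}{k}$ available for every ordering, this forces $c_i=\binom{d-i+1}{k}$ identically in every ordering. Placing an arbitrary $v\in S$ first then yields $f_{k-1}(P/v)=\binom{d}{k}$. The main obstacle at this point is the rigidity input: one needs that a $(d-1)$-polytope with exactly $\binom{d}{k}$ faces of dimension $k-1$ must be a simplex. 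For $k=1$ this is immediate (only $d$ vertices), and for $k\in[2\ldots d-2]$ it is the standard fact that the simplex uniquely minimises $f_j$ among polytopes of its dimension. Granting this, $P/v$ is a $(d-1)$-simplex, so $v$ is simple.
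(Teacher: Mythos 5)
Your proposal is correct and follows essentially the argument that the paper outsources to its citations: the greedy facet-descent for (i) and the first-hit partition with the simplex bound on vertex figures for (ii) are exactly Xue's proof of \cite[Prop.~3.1]{Xue21}, and your reordering/rigidity argument for (iii) is precisely what the paper's one-line justification (via \cite[Cor.~8.2.6]{Pin24}) intends. The only input you leave as a citation---that a $(d-1)$-polytope with exactly $\binom{d}{k}$ faces of dimension $k-1$ for some $k\in[1\ldots d-2]$ must be a simplex---is available within the paper itself from the uniqueness clause of \cref{thm:at-most-2d}, so there is no gap.
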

\begin{proof} Statements (i)--(ii) are from \cite[Prop.~3.1]{Xue21}. Part  (iii) is proven in \cite[Cor.~8.2.6]{Pin24}; it follows from noting that the minimum of the right-hand side is attained when each vertex figure $F_{i}/v_{\ell i}$ is a $(d-i)$-simplex. 
\end{proof}



We now deal with dimensions 3 and 4 in \cref{thm:at-most-2d-refined}.

\begin{proposition}\label{basecase}
    For  $d=3$ or $4$ and $3\le s\le d$, the $d$-polytopes with precisely  $d+s$ vertices, at least $d+3$ facets, and at most $f_k(\Pm(s-1,d+1-s))$  $k$-faces for each $k\in [1\ldots d-2]$ are as follows.

\begin{enumerate}[{\rm (i)}]
\item For $d=3$, the minimisers are $\TA(3)$ and the pyramid over the pentagon, both with $f$-vector $(6, 10, 6)$.
\item For $d=4$, the minimisers with 7 vertices are  the pyramid over $\TA(3)$ and the 2-fold pyramid over a pentagon, both with $f$-vector $(7,16,16,7)$. 
\item The minimisers with 8 vertices, all with $f$-vector $(8,18,17,7)$, are the polytopes $\TA(4)$, $\Pm(3,1)$, $\WP$, and $\pyr(\Sigma(3))$.
\end{enumerate}
\label{prop:small-cases}
\end{proposition}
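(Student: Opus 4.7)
The plan is to treat the three cases separately. In dimension three I use Euler's formula directly; in dimension four I reduce to the three-dimensional case whenever $P$ is a pyramid, and otherwise enumerate the nonpyramidal candidates using the structural bound from Proposition~\ref{prop:number-faces-outside-facet}.

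For case (i), let $P$ be a $3$-polytope with six vertices and $f_2(P)\ge 6$. Euler's relation $f_1=f_0+f_2-2\ge 10$, combined with the hypothesis $f_1(P)\le f_1(\Pm(2,1))=10$, forces $f_1(P)=10$ and $f_2(P)=6$. Writing $p_i$ for the number of $i$-gonal $2$-faces, we have $\sum p_i=6$ and $\sum ip_i=2f_1=20$, so $\sum(i-3)p_i=2$. This leaves two face-type vectors: $(p_3,p_5)=(5,1)$, uniquely realised by the pentagonal pyramid; and $(p_3,p_4)=(4,2)$, where a short subcase split on whether the two quadrilaterals share an edge, together with the degree-at-least-$3$ condition at each vertex, leaves $\TA(3)$ as the only combinatorial type.

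For case (ii), let $P$ be a $4$-polytope with seven vertices and $f_3(P)\ge 7$. If $P=\pyr(Q)$, then $f_0(Q)=6$, $f_3(P)=f_2(Q)+1$, and the identity $f_k(\pyr(Q))=f_k(Q)+f_{k-1}(Q)$ translates the assumptions into $f_2(Q)\ge 6$ and $f_k(Q)\le f_k(\Pm(2,1))$ for $k=1,2$. Case (i) then yields $Q\in\{\TA(3),\,\text{pentagonal pyramid}\}$, giving the two listed minimisers. For nonpyramidal $P$, Gr\"unbaum's classification of $4$-polytopes with $d+3=7$ vertices (via planar Gale diagrams) yields a short list of combinatorial types; a direct check shows none has both $f_3\ge 7$ and the required $f_1,f_2$ bounds. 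Case (iii) proceeds analogously: the pyramidal subcase reduces to a $3$-polytope $Q$ with seven vertices satisfying $f_2(Q)\ge 6$ and $f_1(Q)\le 11$, and Euler pins the $f$-vector to $(7,11,6)$. The incidence analysis $\sum(i-3)p_i=4$ produces a handful of face-type vectors, of which Steinitz's theorem and direct inspection admit only $\Sigma(3)$ and $\Pm(3)$, yielding $\pyr(\Sigma(3))$ and $\Pm(3,1)$. For nonpyramidal $P$ with eight vertices, appeal to the classical enumeration of $4$-polytopes with eight vertices and at least seven facets; Proposition~\ref{prop:number-faces-outside-facet}(iii) forces most vertices of a hypothetical minimiser to be simple, cutting the list down to $\TA(4)$ and $\WP$. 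Lemma~\ref{wedge}(ii) applied to the corresponding wedge constructions then confirms the common $f$-vector $(8,18,17,7)$.

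The main obstacle is the nonpyramidal enumeration in case (iii). A fully self-contained treatment requires either a careful Gale-diagram analysis in $\mathbb{R}^3$ or an appeal to the Altshuler--Steinberg enumeration of $4$-polytopes with eight vertices. Proposition~\ref{prop:number-faces-outside-facet}(iii) is the key structural tool: applied to a maximal sequence of nonsimple vertices in a hypothetical minimiser, it forces strong simplicity constraints that rule out polytopes outside the target list. A secondary subtlety is the verification that $\WP$ and $\pyr(\Sigma(3))$, arising from the two possible wedge positions on a square pyramid, indeed have the same $f$-vector as $\TA(4)$ and $\Pm(3,1)$; this is routine from Lemma~\ref{wedge}, but must be checked to close the argument.
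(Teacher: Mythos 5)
Your dimension-three argument and your pyramidal reductions are correct and genuinely more self-contained than the paper's treatment: where the paper simply cites the Britton--Dunitz catalogue for $d=3$ and Gr\"unbaum's Gale-diagram classification for seven vertices, you pin down the $f$-vector with Euler's relation (the facet hypothesis forces $f_1\ge f_0+f_2-2$ to meet the assumed upper bound) and then finish by face-type counting; the reduction $f_k(\pyr(Q))=f_k(Q)+f_{k-1}(Q)$ correctly transfers the problem to $3$-polytopes with $f$-vectors $(6,10,6)$ and $(7,11,6)$. What this buys is independence from the seven-vertex catalogue for the pyramidal minimisers; what it costs is that the uniqueness claims for the face-type vectors $(p_3,p_4)=(4,2)$, $(2,4)$ and $(p_3,p_4,p_5)=(3,2,1)$ still need the short Steinitz-type verifications you allude to.

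The genuine gap is the nonpyramidal eight-vertex case, which you yourself flag as "the main obstacle" without closing it. "Appeal to the classical enumeration of $4$-polytopes with eight vertices" followed by "\cref{prop:number-faces-outside-facet}(iii) forces most vertices to be simple, cutting the list down" is not an argument: \cref{prop:number-faces-outside-facet}(iii) only applies once you have exhibited a facet $F$, a vertex sequence outside it, and an \emph{equality} in the face count of part (ii), and you never set this up; moreover you never rule out nonpyramidal candidates with $f_1\in\{16,17\}$. The paper avoids all of this with two targeted facts and one identity: there is no $4$-polytope with $8$ vertices and $17$ edges \cite[Sec.~10.4]{Gru03}; there are exactly four with $18$ edges (\cite[Lem.~18]{PinUgoYos22}), namely $\TA(4)$, $\Pm(3,1)$, $\WP$ and $\pyr(\Sigma(3))$; and Euler's relation $f_2=f_1+f_3-f_0\ge 18+7-8=17$ transfers the edge bound to the $2$-face bound, so no separate analysis of $f_2$ (and no pyramidal/nonpyramidal split) is needed. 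To close your version you would have to either import those same two citations or genuinely work through the Altshuler--Steinberg list; as written, the filtering step is a placeholder.
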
    
\begin{proof}
  The catalogues of 3-polytopes with 6 vertices (see, for instance,  \cite[Fig.~3]{BriDun73}) reveal that the two possible minimisers are  $\TA(3)$ and the pyramid over the pentagon; both have $f$-vector $(6, 10, 6)$. This can  be deduced easily from Steinitz's theorem.

  The 4-polytopes with exactly seven vertices were completely characterised by Gr\"unbaum, in terms of their Gale diagrams: a complete list, together with the corresponding $f$-vectors, can be found in \cite[Figure 5]{Gru70}. Another listing of the Gale diagrams, without their $f$-vectors, can be found in \cite[Figure 6.3.3, 6.3.4]{Gru03}. The minimisers with seven vertices are the pyramid over $\TA(3)$ and the 2-fold pyramid over a pentagon, with $f$-vector (7,16,16,7). Every other 4-polytope with 7 vertices and at least 7 facets has $f_1\ge17$ and $f_2\ge17$.
  
  A 4-polytope with 8 vertices and 16 edges must be a prism, which has $d+2$ facets. There is no 4-polytope with 8 vertices and 17 edges \cite[Section 10.4]{Gru03}. There are exactly four 4-polytopes with 8 vertices and 18 edges, namely $\TA(4)$, $\Pm(3,1)$, $\WP$, and a pyramid over $\Sigma(3)$, all with $f$-vector $(8,18,17,7)$; see  \cite[Lem.~18]{PinUgoYos22}. Any 4-polytope with 8 vertices and at least 7 facets must satisfy
  $f_2=f_1+f_3-f_0\ge 18+7-8=17$,
  with strictly inequality unless $f_1=18$ and $f_3=7$. Thus these four polytopes are also the only minimisers of $f_2$ for 4-polytopes with 8 vertices and at least 7 facets. Any other 4-polytope with eight vertices and at least seven facets has more edges and 2-faces than these examples.
\end{proof}

  It is worth noting that the $f$-vectors of all 4-polytopes with up to nine vertices, together with the corresponding number of combinatorial types, can be found in \cite[Tables 6 and 7]{Fir20}. 
  
  We recall the smallest simple $d$-polytopes; see \cite[Lem.~2.19]{PinUgoYos16a}.
  
%
%
%
%
%

\begin{remark} The smallest vertex counts of simple $d$-polytopes are $d + 1$ (the $d$-simplex), $2d$ (the simplicial $d$-prism), and $3d-3$ (the polytope $T(2)\times T(d-2)$). These all have $d+2$ facets.
\label{rmk:simple-polytopes}
\end{remark}


\begin{theorem}\label{thm:strong-minimiser}
    Let $d\ge 3$ and $3 \le s \le d$, and let $P$ be a $d$-polytope with $d+s$ vertices and at least $d+3$ facets. Then $f_k(P) \ge \zeta_k(d+s,d)$ for each $k\in [1\ldots d-2]$.
\end{theorem}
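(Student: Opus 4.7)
The plan is to proceed by induction on $d$, with the base cases $d=3$ and $d=4$ provided by \cref{prop:small-cases}. For the inductive step, fix $d\ge 5$, assume the conclusion in all smaller dimensions, and let $P$ be a $d$-polytope with $d+s$ vertices, at least $d+3$ facets, where $3\le s\le d$. The argument splits according to whether $P$ is a pyramid.

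\textbf{Case 1 (pyramid).} If $P=\pyr(F)$ then $F$ is a $(d-1)$-polytope with $(d-1)+s$ vertices; since the facets of $P$ are $F$ together with pyramids over facets of $F$, $F$ has at least $(d-1)+3$ facets. For $3\le s\le d-1$, the induction hypothesis applied to $F$ yields $f_i(F)\ge \zeta_i((d-1)+s,d-1)$ for $i\in\{k-1,k\}$; for $s=d$, the same bound follows from \cref{thm:2dplus1}(i) applied to $F$ (which has $2(d-1)+1$ vertices), together with \cref{eq:pyramid-pentasm-function}. The pyramid identity $f_k(P)=f_k(F)+f_{k-1}(F)$, combined with Pascal's identity applied to each binomial appearing in the definition of $\zeta$ (in particular $\binom{d-2}{k}+\binom{d-2}{k-1}=\binom{d-1}{k}$ and $\binom{d-s}{k+1}+\binom{d-s}{k}=\binom{d+1-s}{k+1}$), collapses to $f_k(P)\ge \zeta_k(d+s,d)$.

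\textbf{Case 2 (non-pyramid).} Since $P$ has at most $2d$ vertices but at least $d+3>d+2$ facets and $2d<3d-3$ for $d\ge 5$, \cref{rmk:simple-polytopes} forces $P$ to possess a non-simple vertex. The key move is to choose a facet $F$ with exactly $d+1$ vertices together with a non-simple vertex $v_1$ outside $F$, and then apply \cref{prop:number-faces-outside-facet} to a sequence $(v_1,v_2,\ldots,v_{s-1})$ consisting of all vertices of $P$ outside $F$. Since the $k$-faces of $P$ not contained in $F$ are exactly those containing at least one $v_i$, \cref{prop:number-faces-outside-facet} produces faces $F_1=P,F_2,\ldots,F_{s-1}$ with
\[
\#\{k\text{-faces of }P\text{ not in }F\}\;\ge\; f_{k-1}(P/v_1)\,+\,\sum_{i=2}^{s-1}\binom{d+1-i}{k}.
\]
As $v_1$ is non-simple, $P/v_1$ is a $(d-1)$-polytope with at least $d+1$ vertices, so \cref{thm:at-most-2d} yields $f_{k-1}(P/v_1)\ge \phi_{k-1}(d+1,d-1)=\binom{d}{k}+\binom{d-2}{k-1}$; an extra $\binom{d-2}{k-1}$ beyond the simplex bound. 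Likewise, \cref{thm:at-most-2d} applied to $F$ gives $f_k(F)\ge \phi_k(d+1,d-1)=\binom{d}{k+1}+\binom{d-2}{k}$. Summing these contributions, using $\binom{d-2}{k}+\binom{d-2}{k-1}=\binom{d-1}{k}$ and the hockey-stick identity $\sum_{i=1}^{s-1}\binom{d+1-i}{k}=\binom{d+1}{k+1}-\binom{d+2-s}{k+1}$, and finally Pascal's identity $\binom{d+2-s}{k+1}=\binom{d+1-s}{k+1}+\binom{d+1-s}{k}$, the total simplifies to exactly $\zeta_k(d+s,d)$.

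The chief obstacle is guaranteeing a facet $F$ with $d+1$ vertices that avoids some non-simple vertex of $P$. If $P$ is simplicial, then every facet has exactly $d$ vertices; in that subcase, a separate argument is required, exploiting that a non-simple vertex of a simplicial polytope has degree at least $d+2$, so that \cref{thm:at-most-2d} applied to $P/v_1$ yields a larger improvement $\phi_{k-1}(d+t,d-1)-\binom{d}{k}$ with $t\ge 2$, which compensates for using a simplex facet (where $r=s$ in \cref{prop:number-faces-outside-facet}). Conversely, if $P$ has no facet of size $d+1$ but has facets of size $\ge d+2$, one applies the induction hypothesis to such a larger facet to obtain a $\zeta_i$-bound on it, and then combines with \cref{prop:number-faces-outside-facet} on the (fewer) vertices outside. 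A complete proof therefore subdivides Case~2 according to the facet-size distribution of $P$, and the bookkeeping of these subcases is the principal technical difficulty.
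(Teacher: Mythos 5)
Your skeleton (induction on $d$ with base cases $d=3,4$, the pyramid case, and the count ``$f_k(F)$ plus faces meeting the vertices outside $F$'' via \cref{prop:number-faces-outside-facet}) is the paper's strategy, and your central computation for a facet with $d+1$ vertices avoiding a nonsimple vertex is exactly the paper's Case 3 and is correct. But the two subcases you defer to at the end are where the real work lies, and your sketches of them do not close. For the all-simplex case: your claim that a nonsimple vertex of a simplicial polytope has degree at least $d+2$ is false (in $T(1)\oplus T(d-1)$ the base vertices are nonsimple of degree $d+1$), and more importantly the relevant hypothesis is not that $P$ is simplicial but that every facet missing the chosen nonsimple vertex $v$ is a simplex. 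Even granting degree $d+t$, your count with a simplex facet gives $\phi_k(d+s,d)+\binom{d-1}{k}-\binom{d-t-1}{k}$, which meets $\zeta_k(d+s,d)$ only when $t\ge s-2$; so you need $\deg(v)\ge d+s-2$, not $d+2$. The paper gets this by taking a second simplex facet $F'$ through another outside vertex and splitting on $|V(F\cup F')|$: when $F\cup F'$ misses only $v$, every vertex of $F\cap F'$ is adjacent to all of $F\cup F'$, forcing $\deg(v)\ge d+s-2$ by maximality; when $F\cup F'$ misses at least two vertices, the deficit is instead recovered by counting $k$-faces inside $F'\setminus F$ separately. Neither mechanism appears in your plan.

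For the large-facet case, ``apply the induction hypothesis to the facet'' is not available in general: the induction hypothesis requires the facet to have at least $(d-1)+3$ ridges, and a facet with $d+m$ vertices may have only $(d-1)+2$ ridges, in which case \cref{thm:at-most-2d} gives only the weaker $\phi$-bound. The paper recovers the missing $\binom{d-2}{k}-\binom{d-m-1}{k}$ by showing that such a facet, if not simple, is a pyramid whose apex has degree at least $d+m$ in $P$, which boosts the vertex-figure term from $\phi_{k-1}(d+1,d-1)$ to $\phi_{k-1}(d+m,d-1)$. The residual subcase, where the facet is a simple polytope (necessarily a simplicial $(d-1)$-prism with $2d-2$ vertices), cannot be settled by $f$-vector bookkeeping at all: the paper rules out one configuration by a coplanarity/convexity argument on the edges $[w_i,w_i']$ and $[v,v']$. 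Your proposal acknowledges that the ``bookkeeping of these subcases is the principal technical difficulty,'' but as written the ingredients you propose for them are either false or quantitatively insufficient, so the proof has genuine gaps precisely there.
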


\begin{proof}
    Proceed by induction on $d\ge 3$ for all $3 \le s \le d$.  The cases $d=3,4$ are covered in \cref{prop:small-cases}, so we assume that $d\ge 5$. \cref{rmk:simple-polytopes} gives that $P$ is not simple.	Let $v$ be a nonsimple vertex of $P$ with a maximum degree and let $F$ be a facet of $P$ that does not contain $v$ and has the maximum number $d+m$ of vertices where $0\le m\le d-1$.

\case $f_{0}(F)=f_{0}(P)-1$; that is, $m=s-1$.
\label{case:strong-minimiser-1}

The induction hypothesis on $d-1$ for $3\le s \le d-1$ yields that
\begin{equation}
\label{eq:pyramid}
\begin{aligned}
f_{k}(P)&=f_{k}(F)+f_{k-1}(F)\\
&\ge f_{k}(\Pm(s-1, d-s))+f_{k-1}(\Pm(s-1, d-s))\\
&=f_{k}(\Pm(s-1, d+1-s).
\end{aligned}
\end{equation}
	
If $s=d$ then $F$ has $2d-1=2(d-1)+1$ vertices and at least $(d-1)+3$ facets. In this case,  \cref{thm:2dplus1}  gives that $f_{k}(F)\ge f_{k}(\Pm(d-1))$. This combined with the relation $f_{k}(P)=f_{k}(F)+f_{k-1}(F)$  yields the result.  \qed

 \case $f_0(F)=d+m=d-1+m+1$, where $ m\in [2\ldots s-2]$. 
\label{case:strong-minimiser-2}
 \begin{subcases}
 \subcase \textit{The facet $F$ has at least $d+2=(d-1)+3$ facets.} 
 \label{subcase:2.1}

 
 Since $\deg_{P}(v)\ge d+1$, by \cref{prop:number-faces-outside-facet},  the number of $k$-faces containing at least one of the vertices outside $F$ is at least $ \phi_{k-1}(d+1, d-1)+ \sum_{i=2}^{s-m} \binom{d-i+1}{k}$. The induction hypothesis on $d-1$ yields that 
		\begin{equation}\label{eq:subcase2.1}
			\begin{aligned}
			f_k(P) & \ge f_k(F)+\phi_{k-1}(d+1, d-1)+ \sum_{i=2}^{s-m} \binom{d-i+1}{k}\\
			& \ge f_k(\Pm(m, d-m-1))+ \phi_{k-1}(d+1, d-1)+ \sum_{i=2}^{s-m} \binom{d-i+1}{k}         \\
			&  =\phi_k(d+m,d-1)+\binom{d-2}{k}-\binom{d-m-1}{k}+\phi_{k-1}(d+1, d-1)\\
			& \quad +\sum_{i=2}^{s-m} \binom{d-i+1}{k}    \\
			& =\binom{d}{k+1}+\binom{d-1}{k+1}-\binom{d-m-1}{k+1}+\binom{d-2}{k}-\binom{d-m-1}{k} \\
			&\quad +\binom{d}{k}+\binom{d-1}{k}-\binom{d-2}{k}+ \sum_{i=2}^{s-m} \binom{d-i+1}{k} \\
			& =\binom{d+1}{k+1}+\binom{d}{k+1}-\binom{d-m}{k+1}+ \sum_{i=2}^{s-m} \binom{d-i+1}{k} \\
			& = f_k(\Pm(s-1, d+1-s))+\binom{d+2-s}{k+1}-\binom{d-m}{k+1}-\binom{d-1}{k}\\
			&\quad + \sum_{i=2}^{s-m} \binom{d-i+1}{k}  \\
			& = f_k(\Pm(s-1, d+1-s))+\sum_{i=0}^{d+1-s}\binom{i}{k}-\sum_{i=0}^{d-m-1}\binom{i}{k}-\binom{d-1}{k}\\
			&\quad + \sum_{i=d-s+m+1}^{d-1} \binom{i}{k}  \\
			& = f_k(\Pm(s-1, d+1-s))+\sum_{i=0}^{d+1-s}\binom{i}{k}-\brac*{\sum_{i=0}^{d+1-s}\binom{i}{k}+\sum_{i=d+2-s}^{d-m-1}\binom{i}{k}}\\
			&\quad -\binom{d-1}{k}+ \sum_{i=d-s+m+1}^{d-1} \binom{i}{k}  \\
			& = f_k(\Pm(s-1, d+1-s))+\sum_{i=d+2-s}^{d-m-1}\brac*{\binom{m-1+i}{k} - \binom{i}{k}}
         \\  & \ge f_k(\Pm(s-1, d+1-s)).
			\end{aligned}
		\end{equation} 
The last inequality holds with equality if and only if $m=s-2$.
 

 \subcase \textit{The facet $F$ has $d+1=(d-1)+2$ facets and is not a simple polytope.} 
 \label{subcase:2.2}
  
 
 If $F$ is not a simple $(d-1)$-polytope, which occurs when either $f_0(F)\le 2d-3$ or $f_{0}(F)=2d-2$ and $F$ is not a simplicial $(d-1)$-prism  (\cref{rmk:simple-polytopes}), then, by \cref{lem:dplus2facets}, $F$ must be a pyramid whose  apex has degree at least $f_0(F)=d+m$ in $P$. Thus, $\deg_{P}(v)\ge d+m=d-1+m+1$. 

By \cref{prop:number-faces-outside-facet},  the number of $k$-faces containing at least one of the vertices outside $F$ is at least $ \phi_{k-1}(d+m, d-1)+\sum_{i=2}^{s-m}\binom{d-i+1}{k}$. Furthermore, $f_k(F)\ge \phi_{k}(d+m, d-1)$ by \cref{thm:at-most-2d}. Hence we have that
		\begin{equation}\label{eq:subcase2.2} 
			\begin{aligned}
			f_k(P) & \ge f_k(F)+\phi_{k-1}(d+m, d-1)+\sum_{i=2}^{s-m}\binom{d-i+1}{k}  \\
			&\ge \phi_{k}(d+m, d-1)+\phi_{k-1}(d+m, d-1)+\sum_{i=2}^{s-m}\binom{d-i+1}{k}  \\
			&  = \binom{d}{k+1}+\binom{d-1}{k+1}-\binom{d-m-1}{k+1}+\binom{d}{k}+\binom{d-1}{k}\\
			&\quad -\binom{d-m-1}{k} +\sum_{i=2}^{s-m}\binom{d-i+1}{k} \\
			& = f_k(\Pm(s-1, d+1-s))+\binom{d+2-s}{k+1}-\binom{d-m}{k+1} -\binom{d-1}{k}\\
			&\quad +\sum_{i=2}^{s-m}\binom{d-i+1}{k}  \\
			& = f_k(\Pm(s-1, d+1-s))+\sum_{i=0}^{d+1-s}\binom{i}{k}-\sum_{i=0}^{d-m-1}\binom{i}{k}
			-\binom{d-1}{k}\\
			&\quad +\sum_{i=d-s+m+1}^{d-1} \binom{i}{k} \\
			& = f_k(\Pm(s-1, d+1-s))+\sum_{i=d+2-s}^{d-m-1}\brac*{\binom{m-1+i}{k} - \binom{i}{k}} \\
                & \ge f_k(\Pm(s-1, d+1-s)).
			\end{aligned}
		\end{equation}

As in the previous scenario,  the  last inequality holds with equality if and only if $m=s-2$.
\begin{remark}[Number of facets containing the vertex $v$]\label{rmk:case2}
In case of equality, the vertex $v$ can have degree either $d+s-2$ or $d+s-1$ in $P$. Additionally, the number of $k-1$ faces in the vertex figure of $v$ must equal $\phi_{k-1}(d+s-2,d-1)$.
\begin{enumerate}[{\rm (i)}]
    \item If its degree is $d+s-2$, then its vertex figure must be a $(s-1,d-s)$-triplex, implying that $v$ is contained in exactly $d+1$ facets of $P$. 
    
    \item  If the degree of $v$ is $d+s-1$ and $s<d$, since 
\begin{equation*}
\phi_{k-1}(d+s-2,d-1)\le \phi_{k-1}(d+s-1,d-1)\le f_{k-1}(P/v),
\end{equation*}
 then \cref{thm:at-most-2d} gives that $P/v$ is a $(s,d-1-s)$-triplex, implying that $v$ is contained in exactly $d+1$ facets of $P$. From $\phi_{k-1}(d+s-1,d-1)=\phi_{k-1}(d+s-2,d-1)$, which amounts to  $\binom{d-s}{k}=\binom{d-s+1}{k}$, it follows that $\binom{d-s}{k-1}=0$. Hence, the equality in this case happens provided that $k\ge d-s+2$. 
    
\item Suppose that the degree of $v$ is $d+s-1$ and $s=d$. If $f_{d-2}(P/v)\ge d+2$, then \cref{thm:2dplus1} gives that $f_{k-1}(P/v)\ge \binom{d}{k}+2\binom{d-1}{k}-\binom{d-2}{k}>\phi_{k-1}(2d-2,d-1)$. If  
$f_{d-2}(P/v_1)=d+1$, then \cref{thm:2dplus1} gives that $f_{k-1}(P/v)\ge f_{k-1}((T^{d-1,d-1-(\floor{(d-1)/2}+2)}_{2})^*)>\phi_{k-1}(2d-2,d-1)$. Thus, equality does not arise in this case.
\end{enumerate}
    \label{rmk:vertex-dplus1-facets}
\end{remark}

\subcase \textit{The facet $F$ has $d+1=(d-1)+2$ facets and is a simple polytope.}
 \label{subcase:2.3}
 
\tinyskip 

This implies that $F$ has $2d-2$ vertices and is a simplicial $(d-1)$-prism. Label the vertices in a simplex $(d-2)$-face $R$ of $F$ as $w_1 ,\dots, w_{d-1}$ and the vertices in the other simplex $(d-2)$-face $R'$ of $F$ as $w'_1 ,\dots, w_{d-1}'$. Then $R\cap R'= \emptyset$. Additionally, label the two vertices outside $F$ as $v, v'$. Since $v$ has degree at least $d+1$ in $P$, there are at least $2d-1$ edges between $\set{v,v'}$ and $F$. Thus, there is a vertex in $F$, say $w_1'$, that is adjacent to both $v$ and $ v'$; see Figure~\ref{fig:1}.

    \begin{figure}
		\begin{center}
		\includegraphics[width=0.6\textwidth]{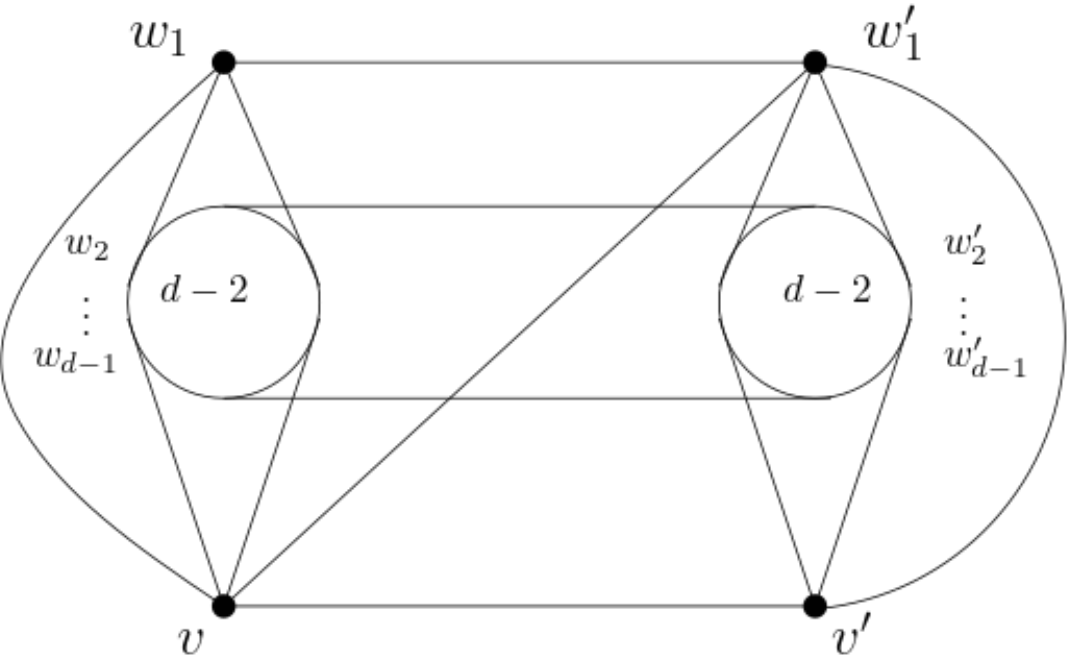}
		\end{center}
		\caption{$F$ is a prism.}
		\label{fig:1}
    \end{figure}

Consider the other facet $F_1$ that intersects $F$ at the simplicial $(d-2)$-prism $K_1=\conv\{w_i, w_i': i=2,\dots, d-1\}$. It follows that $f_{0}(F_1)=2d-3,2d-2$. 

First suppose that $f_{0}(F_{1})=2d-3$. Then $F_1$ is a pyramid over $K_1$ and thus, has $d+1$ $(d-2)$-faces. \green{The apex of $F_1$, namely $v$ or $v'$, has degree at least $2d-2$ in $P$ as it is adjacent to $w_1'$. 
The computation in \eqref{eq:subcase2.2} also applies here:}




\begin{equation}   
\label{eq:dplusd1-2dvertices} 
 \begin{aligned}
    f_k(P) & \ge f_k(F)+\phi_{k-1}(2d-2, d-1)+\binom{d-1}{k} \\
    & \ge \brac*{\binom{d}{k+1}+\binom{d-1}{k+1}-\binom{1}{k+1}}+\brac*{\binom{d}{k}+\binom{d-1}{k} -\binom{1}{k}}   \\
    & \quad +\binom{d-1}{k}\\
    & =\binom{d+1}{k+1}+\binom{d}{k+1}-\binom{2}{k+1}+\binom{d-1}{k} \\
    & =f_k(\Pm(d-1, 1)).
 \end{aligned}
\end{equation}
In case of equality in \eqref{eq:dplusd1-2dvertices}, we find that $v'$ cannot be the apex of $F_1$, as $v'$ is present in exactly $\binom{d-1}{k}$ $k$-faces not containing $v$. 
(We can actually apply \cref{eq:subcase2.2}, then in case of equality, it implies that $v'$ has to be simple in every facet not containing $v$, so $v'$ cannot be the apex of $F_1$.)
\green{Furthermore, by reasoning as in \cref{rmk:case2}(iii), since $f_{k-1}(P/v)=\phi_{k-1}(2d-2,d-1)$, we find that  $P/v$ must have $d+1$ $(d-2)$-faces and be a simplicial $(d-1)$-polytope. In particular, the vertex $v$ has degree exactly $2d-2$.}


Suppose that $F_1$ has $2d-2$ vertices. Then $F_{1}$ is not a pyramid. If $F_1$ has at least $d+2$ facets, then, since $w_{1}'$ is a nonsimple vertex outside $F_1$, we settle this by the calculation of \cref{eq:subcase2.1}. 
We can then assume $F_1$ has $d+1$ facets, in which case $F_1$ is a simplicial $(d-1)$-prism; see \cref{lem:dplus2facets}. Without loss of generality, we may assume that, within $F_1$, the vertex $v$ is adjacent to $w_2,\dots, w_{d-1}$ and $v'$ is adjacent to $w_2',\dots, w_{d-1}'$. It follows that $w_2,\dots, w_{d-1}$, $w_2',\dots, w_{d-1}'$ are simple vertices in $P$.


 As $F_1$ is a prism, the lines containing the edges $[w_i, w_i']$, $i=2,\dots,d-1$ and the line containing the edge $[v, v']$ have the property that every two of them are coplanar. Likewise as $F$ is a prism, the lines containing the edges $[w_i, w_i']$, $i=1,\dots, d-1$, have the property that every two of them are coplanar. It follows that $[w_1, w_1']$ is coplanar with $[v, v']$. Since the vertex $w_1'$ is adjacent to $v$ and  $v'$, and the vertex $w_1$ is adjacent to either $v$ or $v'$, we have a violation of convexity, therefore this case does not arise.
\qed
\end{subcases}
 
\case The facet $F$ has $d+1$ vertices. 
\label{case:strong-minimiser-3}

\tinyskip
	
There are $s-1$ vertices outside $F$, say $\set{u_{1},\ldots,u_{s-1}}$ where $u_{1}=v$. By \cref{prop:number-faces-outside-facet}, there is a sequence $F_1, \ldots, F_{s-1}$ of faces of $P$ such that each $F_i$ has dimension $d-i+1$ and contains $u_i$, but does not contain any $u_j$ with $j<i$. The number of $k$-faces in $P$ is bounded from below as follows: 
		\begin{equation}\label{eq:case3}
			\begin{aligned}
			f_k(P) & \ge f_{k}(F)+\phi_{k-1}(d+1, d-1)+\sum_{i=2}^{s-1}\binom{d-i+1}{k} \\
			& \ge \phi_k(d+1, d-1)+\phi_{k-1}(d+1, d-1)+\sum_{i=2}^{s-1}\binom{d-i+1}{k} \\
			& = \binom{d}{k+1}+\binom{d-1}{k+1}-\binom{d-2}{k+1}+\binom{d}{k}+\binom{d-1}{k}-\binom{d-2}{k}\\
			&\quad +\sum_{i=2}^{s-1}\binom{d-i+1}{k} \\
			& = f_k(\Pm(s-1, d+1-s))+\binom{d+2-s}{k+1}-\binom{d}{k+1}+\sum_{i=2}^{s-1}\binom{d-i+1}{k}\\
& = f_k(\Pm(s-1, d+1-s))+\sum_{i=0}^{d+1-s} \binom{i}{k}-\sum_{i=0}^{d-1} \binom{i}{k}+\sum_{i=2}^{s-1}\binom{d-i+1}{k} \\
& = f_k(\Pm(s-1, d+1-s))-\sum_{i=d+2-s}^{d-1}\binom{i}{k}+\sum_{i=2}^{s-1}\binom{d-i+1}{k} \\
			& = f_k(\Pm(s-1, d+1-s)).
			\end{aligned}
		\end{equation}
This settles this case. \qed

 \case The facet $F$ (and any other facet not containing $v$) is a simplex.
 \label{eq:case4}

\begin{subcases}
Let $v_{1}$ be a vertex not in $F$ other than $v$. By assumption, there must be a simplex facet $F'$ containing $v_{1}$ but not $v$. Let $v_{1},\ldots,v_{a}$ represent the vertices in $F'\setminus F$ and $v_{a+1},\ldots,v_{s}$ the vertices outside $F\cup F'$.  We consider two scenarios according the number $s-a$ of vertices outside $F \cup F'$. 	
	
	\tinyskip	
\subcase \textit{The inequality $s-a \ge 2$ holds.} 
 \label{subcase:4.1}
 
\tinyskip

By \cref{prop:number-faces-outside-facet}, there is a sequence $J_1,\ldots, J_{a}$ of faces in $F'$ such that each $J_i$ has dimension $d-1-i+1$ and contains $v_i$ but does not contain $v_j$ with $j<i$. Additionally, there is a sequence $F_{a+1},\ldots, F_{s}$ of faces in $P$ such that each $F_{\ell}$ has dimension $d-\ell+(a+1)$.  
 Consequently, we have the following. 

		\begin{equation*}\label{eq:subcase4.1}
			\begin{aligned}
			f_k(P) & \ge f_k(F)+ \#\; \text{of $k$-faces in $F'\setminus F$ containing a vertex in $\set{v_1,\dots, v_a}$}  \\
			& \quad +\#\; \text{of $k$-faces outside  $F\cup F'$ containing a vertex in $\set{v_{a+1},\dots,v_s}$} \\
			& \ge \binom{d}{k+1}+\brac*{\sum_{i=1}^{a} \binom{d-i}{k}}+\brac*{\phi_{k-1}(d+1,d-1)+\sum_{i=2}^{s-a}\binom{d-i+1}{k}} \\
			& = \binom{d}{k+1}+\sum_{i=1}^{a} \binom{d-i}{k}+\binom{d}{k}+\binom{d-1}{k}-\binom{d-2}{k}
		+\sum_{i=2}^{s-a}\binom{d-i+1}{k} \\
			& = f_k(\Pm(s-1, d+1-s))+\binom{d+2-s}{k+1}-\binom{d}{k+1}-\binom{d-2}{k} \\
			& \quad +\sum_{i=1}^{a} \binom{d-i}{k}+\sum_{i=2}^{s-a}\binom{d-i+1}{k}\\
			& = f_k(\Pm(s-1, d+1-s))+\sum_{i=0}^{d+1-s}\binom{i}{k}-\sum_{i=0}^{d-1}\binom{i}{k}-\binom{d-2}{k} \\
			& \quad +\sum_{i=1}^{a} \binom{d-i}{k}+\sum_{i=2}^{s-a}\binom{d-i+1}{k}\\
			& =  f_k(\Pm(s-1, d+1-s))-\sum_{i=d+2-s}^{d-1}\binom{i}{k}-\binom{d-2}{k} +\sum_{i=d-a}^{d-1}\binom{i}{k}\\
			&\quad +\sum_{i=d+a+1-s}^{d-1}\binom{i}{k}    \\
			& =  f_k(\Pm(s-1, d+1-s))-\sum_{i=d+2-s}^{d+a-s}\binom{i}{k}-\binom{d-2}{k} +\sum_{i=d-a}^{d-1}\binom{i}{k} \\
			& =  f_k(\Pm(s-1, d+1-s))+\brac*{\binom{d-1}{k}-\binom{d-2}{k}}\\
			& \quad +\sum_{i=d+2-s}^{d+a-s}\underbrace{\binom{i+s-a-2}{k}-\binom{i}{k}}_{\ge 0\;  \text{since $s-a-2\ge 0$}} \\
			& > f_k(\Pm(s-1, d+1-s)).
			\end{aligned}
		\end{equation*}

\subcase  \textit{The equality $s-a=1$ holds; that is, $v$ is the only vertex that is outside $F \cup F'$.} 
\label{subcase:4.2}

\tinyskip	

In this case, $\card (F \cap F') \ge 1$, and each vertex  in $F \cap F'$ is adjacent to every other vertex in $F \cup F'$, thereby implying that such a vertex has degree at least $d+s-2=\card (F \cup F')$ in $P$. By maximality, $\deg(v)\ge d+s-2$. Again, by \cref{prop:number-faces-outside-facet}, the number of $k$-faces containing a vertex outside $F$ is at least $\sum_{i=2}^{s}\binom{d-i+1}{k}+\phi_{k-1}(d+s-2, d-1)$. Hence, we have the following.
		\begin{equation}\label{eq:subcase4.2}
			\begin{aligned}
			f_k(P) 
 & \ge f_{k}(F)+\sum_{i=2}^{s}\binom{d-i+1}{k}+\phi_{k-1}(d+s-2, d-1) \\
 & \ge \binom{d}{k+1}+\sum_{i=2}^{s}\binom{d-i+1}{k}+\phi_{k-1}(d+s-2, d-1) \\
			&  =\binom{d}{k+1}+\binom{d}{k}+\binom{d-1}{k}-\binom{d+1-s}{k}+\sum_{i=2}^{s}\binom{d-i+1}{k} \\
			& =f_k(\Pm(s-1, d+1-s))-\binom{d+1}{k+1}+\binom{d+1-s}{k+1}+\binom{d}{k}\\
						&\quad +\sum_{i=2}^{s}\binom{d-i+1}{k} \\
			& =f_k(\Pm(s-1, d+1-s))-\sum_{i=1}^{s}\binom{d-i+1}{k}+\binom{d}{k}\\
						&\quad +\sum_{i=2}^{s}\binom{d-i+1}{k} \\			
			& =f_k(\Pm(s-1, d+1-s)).
			\end{aligned}
		\end{equation} 

We delve deeper into the degree of $v$ and the facets containing it. 

\begin{remark}\label{rmk:equality_case4} In case of equality in \eqref{eq:subcase4.2}, we have the following. 

(i) If $s\le d-1$ then $\card (F\cap F')\ge 2$. Since $v$ has degree at least $d+s-2$, it must be adjacent to a vertex $w \in F\cap F'$, as there are not $d+s-2$ vertices outside the intersection. Consequently, the vertex $w$ has degree $d+s-1$ in $P$, which implies that $v$ has degree $d+s-1$ in $P$.

(ii) Suppose that $s=d$. If $\deg(v)= 2d-1$, then the vertex figure $P/v$ has $2d-1$ vertices. Reasoning as with \cref{rmk:vertex-dplus1-facets}(iii), we see that this case does not arise.

Assume that $\deg(v)= 2d-2$. There is a facet that contains $v$ and has $2d-2$ vertices in total. Indeed, let $w$ be the unique vertex in $F\cap F'$. Then we get that  $\deg(w)=2d-2$ and that $w$ is not adjacent to $v$ (otherwise $\deg(w)=2d-1$). Furthermore, since $f_{k}(P/v)=\phi_{k}(2d-2,d-1)$, we find that  $P/v$ is a simplicial $(d-1)$-prism. Consider a simplicial $(d-2)$-prism $R$ in $P/v$. The facet $J$ of $P$ containing $v$ (and $R$) has at least $2(d-2)+1$ vertices. It follows that $J$ must also contain $w$; otherwise $J$ is a nonsimplex facet missing a vertex of largest degree.  That is, $f_{0}(J)=2d-2$.

In every scenario of the remark,  $v$ is contained in exactly $d+1$ facets of $P$. 
\label{rmk:case4-vertex-degree}
\end{remark}
\end{subcases} 
The proof of the theorem is complete. 
	\end{proof}

\subsection{Equality analysis}
\label{sec:equality}

\cref{thm:dplus2-vertices,thm:strong-minimiser} deal with the inequality part of our main theorem (\cref{thm:at-most-2d-refined}). We now characterise the equality cases. Let us highlight them.

\begin{remark} For the parameters $(d,s,m)$ where $d\ge 5$, $3\le s\le d$ and $0\le m\le s-1$, let $P$ be a $d$-polytope with $d+s$  vertices and at least $d+3$ facets, let $v$ be a nonsimple vertex with a largest degree in $P$, let $F$ be a facet not containing $v$ and  with a largest  possible number $d+m$ of vertices, and let $X$ be the set of vertices outside $F$. The proof of \cref{thm:strong-minimiser} establishes strict inequality if $2\le m\le s-3$. The possible equality cases can  occur only in the following settings.
 \begin{enumerate}[{\rm (i)}]
 \item \textbf{$m=s-1$ (\cref{thm:strong-minimiser}, \cref{case:strong-minimiser-1})}: The facet $F$ satisfies $f_k(F)=f_{k}(\Pm(s-1,d-s))$ for each $k\in [1\ldots d-2]$; see \eqref{eq:pyramid}. If $s=d$, then $\Pm(d-1,0)=\Pm(d-1)$.
 
 \item   \textbf{$m=s-2$ (\cref{thm:strong-minimiser}, Subcase~2.\ref{subcase:2.1})}: The facet $F$ has at least $d+2$ $(d-2)$-faces and satisfies $f_k(F)=f_{k}(\Pm(s-2,d-s+1))$ for $k\in [1\ldots d-2]$. Additionally,  the vertex $v$ has degree $d+1$ and its vertex figure is a $(2,d-3)$-triplex. The number of $k$-faces containing the other vertex in $X\setminus\set{v}$ but not $v$ is $\sum_{i=2}^{2}\binom{d-i+1}{k}=\binom{d-1}{k}$; see  \eqref{eq:subcase2.1}. 
 
\item   \textbf{$m=s-2$ (\cref{thm:strong-minimiser}, Subcase~2.\ref{subcase:2.2})}: The  vertex $v$ has degree $d+s-2$ and its vertex figure is a $(s-1,d-s)$-triplex. The facet $F$ has $d+1$ $(d-2)$-faces and is an  $(s-1,d-s)$-triplex. Additionally, the number of $k$-faces containing  the other vertex in $X\setminus\set{v}$ but not $v$ is $\sum_{i=2}^{2}\binom{d-i+1}{k}=\binom{d-1}{k}$; see  \eqref{eq:subcase2.2}.

\item \textbf{$m=s-2$ (\cref{thm:strong-minimiser}, Subcase~2.\ref{subcase:2.2})}: The vertex $v$ has degree $d+s-1$ and its vertex figure is a $(s,d-1-s)$-triplex with $s\le d-1$. The facet $F$ has $d+1$ $(d-2)$-faces and is an  $(s-1,d-s)$-triplex. Furthermore, $\phi_{k-1}(d+s-1,d-1)=\phi_{k-1}(d+s-2,d-1)$, which implies that $\binom{d-s}{k}=\binom{d-s+1}{k}$ or, equivalently that $k\ge d-s+2$; see \cref{rmk:vertex-dplus1-facets}(ii). Additionally, the number of $k$-faces containing  the other vertex in $X\setminus\set{v}$ but not $v$ is $\sum_{i=2}^{2}\binom{d-i+1}{k}=\binom{d-1}{k}$; see  \eqref{eq:subcase2.2}.


\item  \textbf{$m=d-2$ (\cref{thm:strong-minimiser}, Subcase~2.\ref{subcase:2.3})}: The facet $F$ is a simplicial $(d-1)$-prism,  there is a facet $F'$ which is a pyramid over a simplicial $(d-2)$-prism from $F$ and whose apex is $v$. The degree of $v$ in $P$ is exactly  $2d-2$ and its vertex figure is a simplicial $(d-1)$-prism. Additionally, the number of $k$-faces containing the vertex in $X\setminus\set{v}$ but not $v$ is $\binom{d-1}{k}$; see \eqref{eq:dplusd1-2dvertices}.

 \item \textbf{$m=1$ (\cref{thm:strong-minimiser}, \cref{case:strong-minimiser-3})}: The facet $F$ is a $(2,d-3)$-triplex, the vertex $v$ has degree $d+1$ and its vertex figure is a $(2,d-3)$-triplex, and the number of $k$-faces containing some vertex in $X\setminus\set{v}$  but not $v$
  is $\sum_{i=2}^{s-1}\binom{d-i+1}{k}$; see \eqref{eq:case3}.

\item \textbf{$m=0$ (\cref{thm:strong-minimiser}, Subcase~4.\ref{subcase:4.2})}: Every facet not containing a nonsimple vertex with the largest degree is a simplex. Moreover, there is a simplex facet $F'$ such that $F\cup F'$ contains $d+s-1$ vertices. If $s\le d-1$, then the vertex $v$ has degree  $d+s-1$ and its vertex figure is a $(s,d-1-s)$-triplex.  If $s= d$, then the vertex $v$ has degree  $2d-2$ and its vertex figure is a simplicial $(d-1)$-prism. Additionally, the number of $k$-faces containing some vertex in $X\setminus\set{v}$ but not $v$ 
  is $\sum_{i=2}^{s}\binom{d-i+1}{k}$; see \eqref{eq:subcase4.2} and \cref{rmk:case4-vertex-degree}.
 \end{enumerate}
\label{rmk:equality-cases}
\end{remark}

\begin{lemma} Let $d\ge 5$ and $3\le s\le d$ be given parameters, and let $P$ be a nonpyramidal  $d$-polytope $P$ with $d+s$ vertices and at least $d+3$ facets. If $f_k(P)=\zeta_k(d+s,d)$ for some $k\in[1\ldots d-2]$, then $P$ has exactly $d+3$ facets.
    \label{lem:facets-minimisers}
\end{lemma}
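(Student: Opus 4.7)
The plan is to leverage the detailed equality analysis in \cref{rmk:equality-cases}. Since $P$ is nonpyramidal, scenario (i)---in which $P$ would be a pyramid over $F$---is excluded, so $P$ falls into one of scenarios (ii)-(vii).

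The first key observation, uniform across these cases, is that the vertex figure $P/v$ is a $(d-1)$-polytope with exactly $d+1$ facets: either a $(2,d-3)$-triplex, an $(s-1,d-s)$-triplex, an $(s,d-s-1)$-triplex, or a simplicial $(d-1)$-prism. Hence $v$ lies in exactly $d+1$ facets of $P$. Writing $N$ for the number of facets of $P$ not containing $v$, we obtain $f_{d-1}(P)=(d+1)+N$; the hypothesis $f_{d-1}(P)\ge d+3$ yields $N\ge 2$. Since $F$ is always one such facet, the problem reduces to establishing $N\le 2$.

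For scenarios (ii)-(v), $X=V(P)\setminus V(F)=\{v,v'\}$ contains only one vertex beyond $v$. Equality in the inequalities of \cref{thm:strong-minimiser} forces the number of $k$-faces of $P$ that contain $v'$ but not $v$ to equal exactly $\binom{d-1}{k}$---the number of $k$-faces of a $(d-1)$-simplex through a prescribed vertex. A straightforward inclusion-exclusion (using that the intersection of two distinct facets has dimension at most $d-2$) shows that if there were two distinct facets of $P$ containing $v'$ but not $v$, this count would strictly exceed $\binom{d-1}{k}$, contradicting equality. Hence exactly one additional facet avoids $v$, so $N=2$. For scenarios (vi) and (vii), where $|X\setminus\{v\}|\ge 2$, the analogous count of $k$-faces containing some vertex of $X\setminus\{v\}$ but not $v$ equals $\sum_{i=2}^{s-1}\binom{d-i+1}{k}$ or $\sum_{i=2}^{s}\binom{d-i+1}{k}$, respectively. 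This sum is realised exactly by a nested chain of faces $F_2\supset F_3\supset\cdots$ (constructed via \cref{prop:number-faces-outside-facet}) housed inside a single facet $F_2$ of $P$ not containing $v$; any second non-$v$ facet sharing vertices with $F_2$ would again violate equality by inclusion-exclusion. In every scenario, $N=2$, giving $f_{d-1}(P)=d+3$.

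The main obstacle is the inclusion-exclusion step in scenarios (vi) and (vii): we must simultaneously show that the chain $F_2,F_3,\ldots$ nests inside a single facet $F_2$ (rather than branching across multiple facets of $P$) and rule out any additional non-$v$ facet that contains a vertex of $X\setminus\{v\}$. Both tasks rely on careful accounting of $k$-faces through shared vertices, using the intersection dimensions of the competing facets and the structural constraints provided by the equality conditions of \cref{prop:number-faces-outside-facet}(iii).
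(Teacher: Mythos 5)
Your overall strategy coincides with the paper's: exploit the fact that $v$ lies in exactly $d+1$ facets, and then show that exactly two facets avoid $v$ by arguing that any additional facet would force extra $k$-faces through the vertices outside $F$, contradicting the equality conditions. Your treatment of scenarios (ii)--(v), where $v'$ is the only vertex outside $F$ other than $v$, is essentially the paper's argument that there is a unique facet containing $v'$ but not $v$ (a second such facet would contribute a $k$-face through $v'$ not lying in the first, pushing the count past $\binom{d-1}{k}$).

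The gap is exactly where you flag it: scenarios (vi) and (vii) are described but not proved. You appeal to a ``nested chain of faces housed inside a single facet'' and to ``inclusion-exclusion'', but no counting argument is actually carried out, and in particular nothing in your sketch excludes the possibility that distinct vertices of $X\setminus\{v\}$ are covered by \emph{distinct} facets avoiding $v$. The paper closes this with the reordering clause of \cref{prop:number-faces-outside-facet}(iii): in the equality case, for \emph{every} ordering of the vertices outside $F$, the number of $k$-faces containing the $i$-th vertex and avoiding the earlier ones equals $\binom{d-i+1}{k}$. Placing an arbitrary $u\ne v$ outside $F$ second in the ordering shows that the $k$-faces through $u$ avoiding $v$ number exactly $\binom{d-1}{k}$; since a single facet through $u$ already supplies at least $\binom{d-1}{k}$ such faces, at most one facet contains $u$ but not $v$. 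Placing $u$ third (after $v$ and a fixed second vertex $v_2$) shows that the $k$-faces through $u$ avoiding both $v$ and $v_2$ number exactly $\binom{d-2}{k}<\binom{d-1}{k}$, so no facet contains $u$ while avoiding both. Consequently every facet other than $F$ and the $d+1$ facets through $v$ must contain $v_2$ and avoid $v$, and there is exactly one such facet, giving $d+3$ facets in total. Your proof needs this (or an equivalent) argument to be complete.
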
   
\begin{proof} We are inspired by the reasoning in \cite[Sec.~4]{Xue21}. The scenarios where we get equality in the proof of \cref{thm:strong-minimiser} are listed in \cref{rmk:equality-cases}. In every case, the following hold:
\begin{enumerate}[{\rm (i)}]
    \item There is a nonsimple vertex $v_1$ (with the maximum degree) contained in exactly $d+1$ facets.
    \item  There is a facet $J$ with $d+m$ vertices not containing $v_1$ where $0\le m\le s-2$. 
    \item Let $Y:=\set{v_1,\ldots,v_{s-m}}$ be the set of vertices outside $J$.  The vertices outside $J$ other than $v_1$ are contained in exactly $\sum_{i=2}^{s-m}\binom{d-i+1}{k}$ $k$-faces of $P$. 
\end{enumerate}
 In  \cref{rmk:equality-cases}, the vertex $v_1$ is the vertex $v$, $Y=X$, and $J=F$. 

\begin{claim} The number of $k$-faces in $P$ containing the vertex $v_i$ and not $v_j$ with $i\ge 2$ and $j<i$ is precisely $\binom{d-i+1}{k}$. 
    \label{cl:facets-minimisers}
\end{claim}
\begin{claimproof} The minimum number of $k$-faces within a $(d-i+1)$-face that contains the vertex $v_{i}$ is attained when the vertex figure is a simplex, namely a $(d-i)$-simplex.  Since the $k$-faces in $P$ containing some vertex in $Y\setminus \set{v_1}$ is precisely $\sum_{i=2}^{s-m}\binom{d-i+1}{k}$, each vertex figure must be a minimiser. 
\end{claimproof}

The facets of $P$ can be partitioned into the facet $J$ and the sets $X_i$ of facets containing the vertex $v_i$ but no vertex $v_j$ with $j<i$. From the first paragraph we get that $\card X_1=d+1$. We prove that $\card X_2=1$ and $\card X_{\ell}=0$ for $\ell\in [3\ldots s-m]$.

Suppose that there are two facets $J_2$ and $J_2'$ in $X_{2}$.  Then there must exist a $k$-face $R$ of $J_{2}'$ that is not a face of $J_{2}$. Since there are already $\binom{d-1}{k}$ $k$-faces in $J_{2}$ containing $v_{2}$ (and not $v_{1}$), the number of $k$-faces in $P$ containing $v_{2}$ but not $v_{1}$ is greater  than $\binom{d-1}{k}$,     contradicting  \cref{cl:facets-minimisers}. Thus $\card X_{2}=1$.

The same idea proves that $\card X_{3}=0$. If there were a facet $J_{3}$ containing $ v_{3}$ but not $v_{1}$ or $v_{2}$, then the number of $k$-faces in $P$ containing $ v_{3}$ but not $ v_{1}$  or $ v_{2}$ would be at least the number of $k$-faces in $J_{3}$ containing $v_{3}$, namely $\binom{d-1}{k}$, which again contradicts \cref{cl:facets-minimisers}. Since we have the freedom to choose a different ordering of the    
vertices in $Y\setminus \set{v_1}$, any vertex vertex in $Y\setminus\set{v_{1},v_{2}}$  can be picked as $v_{3}$. Thus  $\card X_{\ell}=0$ for $\ell\in [3\ldots r]$, yielding that $P$ has $d+3$ facets.
\end{proof}

We write down a corollary of \cref{lem:facets-minimisers} to aid our  future discussion.

\green{\begin{corollary}\label{cor:case2_equality} Let $d\ge 5$ and $3\le s\le d$, let $P$ be a $d$-polytope with $d+s$ vertices and at least $d+3$ facets, let $v$ be a nonsimple vertex with the highest degree, and let $F$ be a facet not containing $v$ and with a largest possible number of vertices. In the equality condition of Subcases 2.\ref{subcase:2.2} and  2.\ref{subcase:2.3} of \cref{thm:strong-minimiser}, the facet $F$  has $d+s-2$ vertices and  $d+1$ $(d-2)$-faces. Moreover, for  the vertex $v'\ne v$ outside $F$,  the following hold:
  \begin{enumerate}[{\rm (i)}]
\item    There is a unique facet containing both $v$ and $v'$, which intersects $F$ at a $(d-3)$-face.
\item    There is a unique facet containing $v'$ but not $v$, which intersects $F$ at a $(d-2)$-face.  
\item    There are either $d-2$ or $d-1$ facets containing both $v$ and $v'$ that intersect with $F$ at a $(d-2)$-face. 
  \end{enumerate}
\end{corollary}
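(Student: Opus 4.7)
The plan is to invoke \Cref{lem:facets-minimisers} to fix the number of facets of $P$ at $d+3$, and to use the partition of facets from its proof: $\{F\}\cup X_1\cup X_2$ with $|X_1|=d+1$ and $X_2=\{F'\}$. From the equality conditions for Subcases~2.\ref{subcase:2.2} and~2.\ref{subcase:2.3} collected in \Cref{rmk:equality-cases}, the facet $F$ is either an $(s-1,d-s)$-triplex or a simplicial $(d-1)$-prism, so $F$ has $d+s-2$ vertices and $d+1$ $(d-2)$-faces; this is the first assertion.

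Next, each of the $d+1$ $(d-2)$-faces of $F$ lies in exactly one facet of $P$ distinct from $F$, and distinct $(d-2)$-faces produce distinct ``partner'' facets. Hence $F$ has exactly $d+1$ partners, so among the $d+2$ facets of $P$ other than $F$, exactly one facet $F_0$ is a non-partner; that is, $\dim(F\cap F_0)\le d-3$.

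To establish~(ii), I would trace the equality in the displays of Subcases~2.\ref{subcase:2.2} and~2.\ref{subcase:2.3} back to \Cref{prop:number-faces-outside-facet}: the summand $\binom{d-1}{k}$ accounting for $v'$ is realised only when $f_{k-1}(F'/v')=\binom{d-1}{k}$, forcing $F'/v'$ to be a $(d-2)$-simplex, so $v'$ is simple in $F'$. Its $d-1$ neighbours in $F'$ all lie in $F$, because the only vertex of $P$ outside $F$ other than $v'$ is $v$, and $v\notin F'$. These $d-1$ vertices already span a $(d-2)$-dimensional face, so $\dim(F\cap F')=d-2$ and $F'$ is a partner of~$F$.

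The heart of the proof, and its main obstacle, is the dimension argument isolating $F_0$. Since $F'\neq F_0$, we have $F_0\in X_1$; if $F_0$ contained $v$ but not $v'$, its vertex set would be $\{v\}\cup V(F_0\cap F)$, and as $v\notin\aff(F)$, adding $v$ increases the affine dimension by exactly one, giving $\dim(F_0)=\dim(F_0\cap F)+1$ and hence $\dim(F_0\cap F)=d-2$, contradicting the non-partner condition. So $F_0$ contains both $v$ and $v'$, and an analogous count, using $v,v'\notin\aff(F)$ together with $\dim(F_0\cap F)\le d-3$, forces $\dim(F_0\cap F)=d-3$ exactly, proving~(i). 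The facets containing both $v$ and $v'$ other than $F_0$ are all partners of $F$ and intersect $F$ in a $(d-2)$-face; their number equals the number of facets of $P$ containing $v'$ minus $2$, which is $d-2$ when $v'$ is simple in $P$ (so $v'$ lies in $d$ facets, namely $F'$ and $d-1$ in $X_1$) and $d-1$ when $v'$ has higher degree and belongs to one further facet of $X_1$, producing the bifurcation in~(iii).
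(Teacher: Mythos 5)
Your proof is correct and takes essentially the same route as the paper's: \cref{lem:facets-minimisers} fixes the facet count at $d+3$ and supplies the partition $\set{F}\cup X_1\cup X_2$ with $\card{X_1}=d+1$ and $\card{X_2}=1$, the $d+1$ ridges of $F$ account for $d+1$ partner facets, and the single remaining facet is the one meeting $F$ in a $(d-3)$-face and containing both $v$ and $v'$, after which (iii) is the same count of facets through $v'$. Your affine-dimension argument showing the non-partner facet must contain both $v$ and $v'$ and meet $F$ in dimension exactly $d-3$, and the vertex-figure argument showing $v'$ is simple in $F'$, simply make explicit steps the paper leaves implicit.
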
}
\begin{proof} \green{In the notation of \cref{lem:facets-minimisers}, we assume that the facet $F$ here is the facet $J$ there, the vertex $v$ is the vertex $v_{1}$ there, and the vertex $v'$ is the vertex $v_{2}$ there.} 

\green{(i) Since $F$ contains exactly d+1 $(d-2)$-faces and, by \cref{lem:facets-minimisers} $P$ contains exactly $d+3$ $(d-1)$-faces, there must exist a unique facet in $P$ consisting of the convex hull of a $(d-3)$-face of $F$ and the vertices $v$ and $v'$.} 
 
\green{(ii) The proof of Claim 1 in \cref{lem:facets-minimisers} shows that the set $X_{2}$ of facets containing $v'$ but not $v$ has cardinality one.}

\green{(iii) Parts (i) and (ii) count two facets in $P$ containing $v'$. The remaining facets containing $v'$ must also contain $v$ and a $(d-2)$-face of $F$. If $v'$ is simple in $P$, there are $d-2$ such facets; otherwise there are $d-1$.}     
\end{proof}

\green{\begin{theorem}[Equality] Let  $d\ge s\ge3$  be parameters and let $P$ be a $d$-polytope  with $d+s$ vertices and at least $d+3$ facets. If, for some $k\in [1\ldots d-2]$,  
\begin{equation*}
f_{k}(P)=f_{k}(\Pm(s-1,d+1-s))=\zeta_k(d+s,d),
\end{equation*} then the following statements hold: 
\begin{enumerate} [{\rm (i)}]
\item If $s=3$, then $P$ is either the $(d-2)$-fold pyramid over a pentagon or the $(d-3)$-fold pyramid over $\TA(3)$.   
\item If $5\le s\le{d-1}$ and $k\le d-s+1$, then $P$ is either the $(d-s+1)$-fold pyramid over an $(s-1)$-pentasm or the $(d-s)$-fold pyramid over $\TA(s)$. 
\item If $5\le s\le{d-1}$ and $d-s+2\le k\le d-2$, then $P$ is either the $(d-s+1)$-fold pyramid over an $(s-1)$-pentasm, the $(d-s)$-fold pyramid over $\TA(s)$, or the $(d-s-1)$-fold pyramid over $Z(s+1)$. 
\item If $s=4$ and either $k\le{ d-3}$ or $d=4$ and $k=2$, then $P$ is either the $(d-3)$-fold pyramid over an 3-pentasm, the $(d-3)$-fold pyramid over $\Sigma(3)$, the $(d-4)$-fold pyramid over $\TA(4)$, or the $(d-4)$-fold pyramid over $\WP$. 
\item   
 If $s=4$, $k=d-2$ and $d\ge5$, then $P$ is either the $(d-3)$-fold pyramid over an 3-pentasm, the $(d-3)$-fold pyramid over $\Sigma(3)$, the $(d-4)$-fold pyramid over $\TA(4)$, the $(d-4)$-fold pyramid over $\WP$  or  the $(d-5)$-fold pyramid over $Z(5)$. 
\item If $s=d$, then $P$  is either $\TA(d)$ or the pyramid over a $(d-1)$-pentasm.   
\end{enumerate}
\label{thm:at-most-2d-refined-equality}
\end{theorem}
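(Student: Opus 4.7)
My plan is to induct on $d\ge 3$, with base cases $d=3,4$ supplied by \cref{prop:small-cases}. For the inductive step, fix $d\ge 5$ and suppose $f_k(P)=\zeta_k(d+s,d)$ for some $k\in[1\ldots d-2]$. First I would invoke \cref{rmk:equality-cases}, which enumerates the seven structural configurations in which equality can arise; each specifies a nonsimple vertex $v$ of maximum degree, a facet $F$ avoiding $v$, and detailed information about the vertex figure $P/v$. \cref{lem:facets-minimisers} then guarantees that $P$ has exactly $d+3$ facets, and \cref{cor:case2_equality} pins down the combinatorics around the second vertex $v'$ outside $F$ in Subcases~2.\ref{subcase:2.2} and 2.\ref{subcase:2.3}.

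The second step is to dispose of the pyramidal configurations. When $m=s-1$ (item (i) of \cref{rmk:equality-cases}), we have $P=\pyr(F)$, where $F$ is a $(d-1)$-polytope with $(d-1)+s$ vertices satisfying $f_k(F)=\zeta_k((d-1)+s,d-1)$. If $s\le d-1$, the induction hypothesis identifies $F$ as one of the listed minimisers in dimension $d-1$, and taking a pyramid yields the desired higher-fold pyramid for $P$. The corner case $s=d$ (so $F$ has $2(d-1)+1$ vertices) is handled by \cref{thm:2dplus1}, forcing $F=\Pm(d-1)$. Items (ii) and (vi) of \cref{rmk:equality-cases} yield pyramids by a parallel argument, using the induction hypothesis together with the constraint that $v$'s vertex figure is of triplex type.

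The third step addresses the non-pyramidal configurations. In items (iii)--(v) of \cref{rmk:equality-cases}, \cref{cor:case2_equality} tells us that the facet $F$ meets the unique facet through $v$ and $v'$ in a $(d-3)$-face, while $v'$ lies in a unique further facet outside $v$. Combined with the classification of $F$ as a triplex (or a simplicial prism when $m=d-2$), this facet-intersection data forces $P$ to be realised as an iterated wedge, yielding either $\TA(s,d-s)$ or $Z(s+1,d-s-1)$; the latter arises precisely when the extra vertex dual to $v$ in the $Z$-construction can coexist combinatorially, a condition translating to $k\ge d-s+2$. Case~4 of the proof of \cref{thm:strong-minimiser} ($m=0$) is cleaner: the shape of $P/v$ together with a single simplex facet reconstructs $P$ as the pentasm pyramid when $s<d$, or as $\TA(d)$ when $s=d$.

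The $s=4$ anomaly, producing $\pyr_{d-3}(\Sigma(3))$ and $\pyr_{d-4}(\WP)$, is handled by using \cref{prop:small-cases}(iii) as a base: in dimension $4$ the four polytopes there share the $f$-vector $(8,18,17,7)$, and for $d\ge 5$ successive pyramids propagate this ambiguity. The main obstacle lies in the third step, namely showing that the combinatorial data from \cref{cor:case2_equality} uniquely determines $P$ up to the listed wedges. The most delicate sub-case is distinguishing $\TA(s,d-s)$ from $Z(s+1,d-s-1)$, since these polytopes differ only by the presence of a second vertex of maximum degree---a distinction first visible at the threshold $k\ge d-s+2$ appearing in the theorem statement, and whose detection requires exploiting \cref{lem:fvector-Z} together with the facet classification from \cref{lem:facets-minimisers}.
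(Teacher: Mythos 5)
Your overall skeleton — induction on $d$ with base cases from \cref{prop:small-cases}, then a case analysis over the seven configurations of \cref{rmk:equality-cases} using \cref{lem:facets-minimisers} and \cref{cor:case2_equality} — is exactly the paper's strategy. However, several of your specific claims about how the cases resolve are wrong, and the hardest part of the argument is left undone. First, items (ii), (iii) and (vi) of \cref{rmk:equality-cases} do \emph{not} ``yield pyramids by a parallel argument'': in the paper they are eliminated outright. In item (ii) the induction hypothesis identifies the facet $F$ (with $d+s-2$ vertices and at least $d+1$ ridges) as one of the $(d-1)$-dimensional minimisers, and then degree counts on $v$ rule out every candidate — pyramidal $F$ forces an apex of degree $\ge d+2$, while $\TA(d-1)$, $Z(d-1)$ and $\WP$ are excluded because $v$ has degree only $d+1$; item (vi) dies because $v$ would be forced to have degree $\ge d+2$. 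Second, your treatment of Case~4 ($m=0$) misattributes the outcome: for $s=d$ that configuration is killed by Balinski's theorem (removing $v$ and $w$ disconnects the graph), not reconstructed as $\TA(d)$. The polytope $\TA(d)$ actually emerges from item (v) ($F$ a simplicial $(d-1)$-prism), and $Z(d)$ from the $d-s=1$ branch of item (iv); the lower-$s$ versions $\TA(s,d-s)$ and $Z(s+1,d-s-1)$ enter only through the pyramid case (i) and induction.

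The genuine gap is your third step. Saying that the data from \cref{cor:case2_equality} ``forces $P$ to be realised as an iterated wedge'' is not an argument: the paper spends most of its proof here, splitting item (iv) by the value of $d-s$ (ruling out $d-s\ge 3$ by a facet count against the total of $d+3$, ruling out $d-s=2$ via \cref{cor:case2_equality}(i), and for $d-s=1$ identifying every ridge-neighbour $J_i$ of $F$ as $Z(d-1)$ by reapplying the inequality \eqref{eq:subcase2.1} and the induction hypothesis, then assembling the full vertex--facet incidence of $Z(d)$), and similarly for item (v) identifying the facets $F_i$ as copies of $\TA(d-1)$ before reconstructing $\TA(d)$. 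None of that reconstruction, nor the elimination of the spurious sub-cases (e.g.\ $f_0(F_1)=2d-3$ in item (v), or the coplanarity contradiction when $F_1$ is a second prism), appears in your proposal. You correctly flag this step as the main obstacle, but flagging it does not close it; as written the proposal establishes the list of candidate configurations but not that the equality cases are exactly the polytopes in the statement.
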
}

\begin{proof}
    
We proceed by induction on the dimension. The induction base is $d=3$ and 4, which is covered by \cref{basecase}. 
    
Let $P$ be a $d$-polytope as in the hypothesis.  Suppose now the theorem holds for dimensions $5, 6, \dots, d-1$. As before, let $v$ be a nonsimple vertex of $P$ with maximum degree and let $F$ be a facet of $P$ that does not contain $v$ and has a maximum number $d+m$ of vertices (amongst such facets). We consider each of the scenarios listed in \cref{rmk:equality-cases}.
 
\tinyskip 
   
\noindent \green{{\textbf{\cref{rmk:equality-cases}(i)}}: The facet $F$ has $d+s-1$ vertices. If $s \le d-1$, by induction on $d-1$, the facet is one of the minimisers. Thus, $P$ is a pyramid over $F$ and has the desired form. If $s=d$, by \cref{thm:strong-minimiser}, \cref{case:strong-minimiser-1} (and \cite{PinYos22}), the facet $F$ is a $(d-1)$-pentasm. Hence, $P$ is a pyramid over the pentasm; that is, a $\Pm(d-1, 1)$.}

\green{We now consider the scenarios in \cref{rmk:equality-cases}(ii)--(v). Here, the facet $F$ has $d+s-2$ vertices with $s\ge 4$. (If $s=3$ then $F$ has $d+1$ vertices, a scenario considered in \cref{rmk:equality-cases}(vi).) Denote by $v'$ the  vertex other than $v$ and outside $F$. In \cref{rmk:equality-cases}(ii)--(v), note that the vertex $v'$ is simple in the unique facet that contains it but does not contain  $v$.}

 \tinyskip
 
\noindent \green{{\textbf{\cref{rmk:equality-cases}(ii)}}: The facet $F$ has at least $d+2$ $(d-2)$-faces and satisfies $f_{k}(F)=f_{k}(\Pm(s-2,d-s+1))$. By the induction hypothesis, $F$ is either $\Pm(s-2, d-s+1)$, $\TA(s-1, d-s)$, $Z(s, d-s-1)$, $\pyr_{d-5}(\WP)$, or $\pyr_{d-4}(\Sigma(3))$.}

\green{First suppose that $F$ is not a pyramid. For $d\ge 5$, this means that the facet $F$ could  be only $\TA(d-1)$, $Z(d-1)$, or $\WP$. \cref{rmk:structure-TA,rmk:structure-Z}  assert that $\TA(d-1)$ and $Z(d-1)$ both have a vertex with degree $2d-4$, which results in a degree at least $2d-3$ in $P$. Since the vertex $v$ has degree $d+1$, so does every nonsimple vertex in $P$. Thus, we find that $d\le 4$, which has been already covered. }
     

  
\green{It remains to consider the scenario where $F$ is $\WP$. We denote by $v_1, v_2, v_3, v_4$ the four nonsimple vertices of $F$; see \cref{fig:polytopes}(b). 
Since $v'$ is simple in the unique facet containing it but not $v$, this facet must be the simplex with vertices $v',v_{1},v_{2},v_{3},v_{4}$. 
Additionally, because the degree of $v$ is six, every nonsimple vertex in $P$ must have degree six. Thus, the vertices $v_{1},v_{2},v_{3},v_{4}$  have each exactly one neighbour outside $F$, which is  $v'$. As a consequence, the neighbours of $v$ in $P$ other than $v'$ must be the four vertices in $F$ other than $v_{1},v_{2},v_{3},v_{4}$, ensuring that the the degree of $v$ in $P$ is five, a contradiction. }
%
%
%
%
     
\green{Assume that $F$ is a pyramid. Since $f_{0}(F)\ge d+2$, its apex has at least $d+1$ neighbours in $F$ and, therefore has degree at least $d+2$ in $P$, contradicting  the degree of $v$.}
   
\tinyskip 
 
\noindent \green{{\textbf{\cref{rmk:equality-cases}}(iii)}: The facet 
    $F$ has $d+1$ $(d-2)$-faces and is a $(s-1, d-s)$-triplex. Then $d-s\ge 1$.  Moreover,  the vertex $v$ has degree $d+s-2$.}
    
\green{Suppose that $d-s \ge 2$. There are at least two apexes $x_1$ and $x_{2}$ in $F$. For $i=1,2$, consider the other facet $F_i$ in $P$ intersecting  $F$ at a $(d-2)$-face that does not contain $x_i$. Since $P$ is not a pyramid, each $F_i$ must be pyramid with apex  either $v$ or $v'$ (but not both). Since $s\ge 4$, each facet $F_i$ is not a simplex. Therefore, as $v'$ is simple in each of the facets containing it, for each $i=1,2$, $v'\not\in F_{i}$. Thus, $v\in F_i$ for each $i=1,2$.  It follows that $v$ is adjacent to every other vertex in the polytope; that is, the degree of $v$ is $d+s-1$, contradicting our assumption. }

\green{Assume that $d-s=1$. Then the facet $F$ is a pyramid over a simplicial $(d-2)$-prism $R$. Then $R$ is the convex hull of two $(d-3)$-simplices $K$ and $K'$. Denote by $x$ the apex of $F$.}

\green{Consider a facet $F'$ in $P$ containing $R$ but not $x$. The facet $F'$ is a pyramid with apex either $v$ or $v'$; otherwise, $f_0(F')=d+s-1$ and $P$ would be pyramid whose apex $x$ has degree $2d-2$, which is larger than the degree of $v$, contradicting our assumption. It follows that $v \in F'$, as $v'$ is simple in each facet that contains it but does not contain  $v$. Because $\deg_{P}(v)=2d-3$, $v$ cannot be adjacent to $x$.}
	
\green{It is true that $\deg_{P}(v') <\deg_{P}(v)$; otherwise, we may pick $v'\notin F$ as our nonsimple vertex with the maximum degree. Additionally, $v$ would need to be simple the facet that contains it but does not contain $v'$. However, $v$ is not simple in $F'$.}
		
\green{    Consider a facet $J'$ in $P$ containing $v'$ but not $v$. Then $J'$ must be a pyramid with apex  $v'$ and intersecting $F$ at a ridge. Since $v'$ is simple in $J'$, $J'$ must be a simplex. Without loss of generality, we may assume  that $J'=\conv(\{ v',x\}\cup K')$. Since $\deg(v') < \deg(v)=2d-3$, we find that $v'$ is not adjacent to at least two vertices in $K$. Finally, we consider another facet $J$ in $P$ containing the $(d-2)$-simplex $\conv \{ x\}\cup K$. Since $v'$ is not adjacent to two vertices in $K$, the degree of $v'$ in $J$ would be at most $d-2$, a contradiction. Thus, $v'\notin J$ and, therefore $v\in J$. On the other hand, because $v$ is not adjacent to $x$, the vertex $v\notin J$, a contradiction.}   
		
 
\tinyskip
 
\noindent \green{{\textbf{\cref{rmk:equality-cases}}(iv)}: The facet 
    $F$ has $d+1$ $(d-2)$-faces and is a $(s-1, d-s)$-triplex.   Denote by $x_1,\ldots, x_{d-s}$ the apexes in $F$, where $d-s\ge 1$.  Moreover,  the vertex $v$ has degree $d+s-1$.}

\green{For $i=1,\ldots, d-s$, consider the facet $F_{i}$ in $P$ intersecting $F$ at the $(d-2)$-face $F\setminus \{x_{i}\}$. If $F_{i}$ contained both $v$ and $v'$, then the polytope would be a pyramid, a case already been covered. Thus, $F_{i}$ is a pyramid with apex $v$, since $f_{0}(F_{i})\ge d+2$ and $v'$ is simple the facet that contains it but does not contain $v$. It follows that, for each $i=1,\ldots, d-s$, $F_{i}$ contains $v$ but not $v'$. Additionally, consider a facet $J'$ in $P$ containing $v'$ but not $v$. Then $J'$ must be a pyramid with apex  $v'$ and intersecting $F$ at a ridge. Since $v'$ is simple in $J'$, the facet $J'$ must be a simplex.}
    
\green{Suppose that $d-s \ge 3$.  Since $F$ does not contain $v'$, there are at least four facets in $P$ not containing $v'$. However, by \cref{lem:facets-minimisers}, the polytope has exactly $d+3$ facets, leaving  at most $d-1$ facets containing $v'$, which is not possible.}

\green{Suppose $d-s=2$. Then $F$ is a two fold pyramid over a simplicial $(d-3)$-prism $R$. Then $R$ is a convex hull of two $(d-4)$-simplices $K$ and $K'$.  Without loss of generality, we assume that $J'=\conv(\{v',x_{1},x_{2}\}\cup K)$.} 

%

\green{By \cref{lem:facets-minimisers}, the polytope has exactly $d+3$ facets. The vertex $v'$ is not present in precisely three of those  facets, namely $F_{1}$, $F_{2}$, and $F$. Thus, $v'$ is simple in $P$, and so is not adjacent to any of the vertices in $K'$. For $i=1,2$, there is a facet $S_i$ in $P$ containing $v'$ but not $x_i$. Since $v'$ is simple in $P$, each $S_{i}$ must intersect $F$ at a $(d-3)$-face  that contains $K$ and the other apex, and so must contain $v$. This violates \cref{cor:case2_equality}(i). }

    
\green{Assume that $d-s=1$. Then $F$ is a pyramid over a simplicial $(d-2)$-prism $R$, and so $f_{0}(F)=2d-3$. The face $R$ is the convex hull of two $(d-3)$-simplices $K:=\conv\{u_{1},\ldots,u_{d-2}\}$ and $K':=\conv\{u'_{1},\ldots,u'_{d-2}\}$ so that the vertices $u_{i}$ and $u_{i}'$ are adjacent. Without loss of generality, we assume that $J'=\conv(\{v',x_{1}\}\cup K)$. The facet $F_{1}$ is a pyramid with apex $v$ over $R$. This means that the vertices in $K$ has degree exactly $d+1$ and the vertices in $K'$ has degree at most $d+1$. }

\green{For $i=1,\ldots,d-2 $, consider the other facet $J_{i}$ in $P$ containing the $(d-2)$-face $I_{i}$ of $F$ missing the edge $u_{i}u_{i}'$. 
By \cref{lem:facets-minimisers}, the polytope has exactly $d+3$ facets. The vertex $v'$ is not present in two of those  facets, namely $F_{1}$ and $F$. As a consequence, $v'$ needs to be present in at least $d-3$ of such facets $J_{i}$, say in those for $i\in [1...d-3]$. 
 Since $v'$ is simple in the facet that contains it but does not contain $v$, the vertex $v$ must be present in all the facets $J_{i}$. Hence, $f_{0}(J_{i})=f_{0}(F)=2d-3$, for  $i\in [1\ldots d-3]$.}

  
\green{For  $i\in [1...d-3]$,  each facet $J_{i}$ is not a pyramid or a simple polytope, and so  has at least $d+2$ $(d-2)$-faces. Indeed, if $J_{i}$ were a pyramid, then either $v$ or $x_{i}$ would be the apex. In case of $v$ being the apex,  $I_{i}\cup \{v'\}$ would be the base but $I_{i}$ is already a $(d-2)$-face; a similar argument establishes that $x_{1}$ cannot be the apex of $J_{i}$. It is also straightforward that $J_{i}$ is not a simple polytope. From \cref{lem:dplus2facets}, it now follows that $J_{i}$ cannot have $d+1$ $(d-2)$-faces. }
    


    
 
\green{For each $i\in [1\ldots d-3]$, since the vertex $u_{i}$ (which is outside $J_{i}$) has degree $d+1$, $J_{i}$ is a largest facet with at least $d+2$ $(d-2)$-faces, and $f_{k}(P)=f_{k}(\Pm(s-1,d+1-s))$ for some $k\in [1\ldots d-2]$, the computation \eqref{eq:subcase2.1} in Subcase \ref{case:strong-minimiser-2}.\ref{subcase:2.1}   applies and gives that $f_{k}(J_{i})=f_{k}(\Pm(s-2,d+1-s))$. In this scenario, the induction hypothesis on $d-1$ ensures that $J_{i}$ is one of the polytopes in Parts (ii)--(v) of the theorem. Since $d\ge 5$, $J_{i}$ is not a pyramid, and $v$ (and $x_{1}$) is adjacent to every other vertex in $J_{i}$, $J_{i}$ can be only $Z(d-1)$. According to the structure of $Z(d-1)$ (\cref{rmk:structure-Z}),  for $i\in [1\ldots d-3]$, the $d-3$ vertices in $K$ and $J_{i}$ has degree $d$ in $J_{i}=Z(d-1)$, so  $v'$ and the $d-3$ vertices in $K'$ and $J_{i}$ must be simple in $Z(d-1)$. Since $d-3\ge 2$, this implies that $v'$ is not adjacent to any vertex in $K'$. This gives the  full graph of $P$, which consists of the union of the graphs of $F$,  $F_{1}$, and $J'$. In particular, 
\begin{itemize}
\item $f_{0}(P)=2d-1$,
\item The $d-1$ vertices in $K'\cup \set{v'}$ are simple in $P$,
\item The $d-2$ vertices in $K$ has degree $d+1$ in $P$, 
\item  the vertices $v$ and $x_{1}$ have degree $2d-2$ in $P$. 
\end{itemize}
See \cref{fig:polytopes}(d) and \cref{rmk:structure-Z}.}




\green{We now establish the vertex-facet incidence of the polytope. For each $i\in [1\ldots d-2]$, let us focus on one $J_i$ and the other facets that intersects $J_i$ at a ridge (\cref{rmk:structure-Z}). Since $J_i$ has exactly $d+2$ ridges of $P$, every other facet of $P$ intersect $J_{i}$ at a $(d-2)$-face of $J_{i}$ . Because the degree of $u_{i}$ in $P$ is $d+1$ and that of $u_{i}'$ is $d$, the other facet intersecting $J_i$ at $Z(d-2)$ must contain both $u_i$ and $ u_i'$ and, thus must be one of the facets $J_{\ell}$. 
The $d-3$ copies of $Z(d-2)$ in $J_{i}$ plus $J_{i}$ result in $d-2$ facets $Z(d-1)$ in $P$ and establish the vertex-facet incidence among them. Similarly, a facet that intersects  $J_i$ at a one of the two copies of $M(d-3,1)$ in $J_{i}$ must contain both $u_i, u_i'$. This gives two facets $M(d-2,1)$ of $P$ and the vertex-facet incidence among them. There are three simplex $(d-2)$-faces in $J_i$ (\cref{rmk:structure-Z}): 
\begin{align*}
	&\conv (\set{ v', v}\cup \set{u_j\in K: j\ne i}),\\ 
	&\conv (\set{ v', x_{1}}\cup \set{u_j\in K: j\ne i}),\\
	&\conv (\set{ v, x_{1}}\cup \set{u_j'\in K': j\ne i}).
\end{align*}
Since $v'$ is simple in $P$, one can see that the other facet containing one of such $(d-2)$-faces will be, respectively  
\begin{align*}
	&\conv (\set{ v', v}\cup K),\\ 
	&\conv (\set{ v', x_{1}}\cup K),\\
	&\conv (\set{ v, x_{1}}\cup K').
\end{align*}
The complete vertex-facet incidence of $P$ is established and, so the polytope is $Z(d)$. }  

 \tinyskip
  
\noindent \green{{\textbf{\cref{rmk:equality-cases}(v)}}: The facet $F$ is a simplicial $(d-1)$-prism and the polytope $P$ has $2d$ vertices.  Moreover,  the vertex $v$ has degree exactly $2d-2$.} 
        
\green{Following the proof of Subcase~\ref{case:strong-minimiser-2}.\ref{subcase:2.3} of \cref{thm:strong-minimiser}, we label the vertices of one $(d-2)$-simplex $R$ of $F$ by $w_1 ,\dots, w_{d-1}$ and the vertices of the other $(d-2)$-simplex $R'$ of $F$ by $w'_1 ,\dots, w_{d-1}'$, with $R\cap R'= \emptyset$. Additionally, let $v'$ be the vertex outside $F$ and distinct from $v$.} 
Let $S$ be the facet of $P$ containing $v'$ but not $v$. Then $S$ must be a pyramid (with apex $v'$) over a $(d-2)$-face contained in $F$. Since $v'$ has to be simple in $S$, it follows that $S$ is a simplex and either $S=\conv(R'\cup \{v'\})$ or $S=\conv(R\cup \{v'\})$. 


Only one vertex of $P$ is not adjacent to $v$, so
we can assume without loss of generality that $w'_{1}$ is adjacent to both $v$ and $v'$. 

\green{Now consider the other facet $F_1$ that intersects $F$ at the simplicial $(d-2)$-prism $K_1=\conv\{w_i, w_i': i=2,\dots, d-1\}$. It follows that $f_{0}(F_1)=2d-3$ or $2d-2$. }

\green{\textbf{First suppose that $f_0(F_1)=2d-3$.} Then the facet $F_{1}$ is a pyramid with apex $v$ over the simplicial $(d-2)$-prism $\conv((R\setminus \set{w_{1}})\cup (R'\setminus \set{w'_{1}}))$ of $F$. It follows that $v$ is adjacent to every vertex in $P$ except for $w_{1}$. Thus, $w_{1}$ is adjacent to $v'$.}

\green{It follows that $\deg(v')<2d-2$. Otherwise  $v'$ would also be a vertex with maximum degree, so the role of $v$ and $v'$ can be swapped. As $F_1$ is a facet containing $v$ but not $v'$ where $v$ is not simple, we would find a contradiction to the assumption that the unique facet containing $v'$ and not $v$ must be simple.
}

\green{First suppose that $S=\conv(R'\cup \{v'\})$. Since $\deg_P(v')<2d-2$, the vertex $v'$ is not adjacent to at least two vertices from $w_2, \dots, w_{d-1}$. Consider the other facet $J$ of $P$ that intersects $F$ at $R$. } 
Since $v'$ is not adjacent to at least two vertices in $R$, the vertex $v'$ would have at most $d-2$ neighbours in $J$, so $v' \notin J$. Then, since $v$ is not adjacent to $w_1$, the vertex $v$ would have at most $d-2$ neighbours in $J$, so $v \notin J$. This is absurd and this case does not arise.


\green{Now suppose that $S=\conv(R\cup \{v'\})$. Consider a facet $G$ that intersects $F$ at the simplicial $(d-2)$-prism $\conv\{ w_i, w_i': i=1,\ldots, d-2 \}$. 
The facet $G$  cannot be a pyramid, as $v$ is not adjacent to $w_{1}\in G$ and $\deg_P(v')<2d-2$. Thus, $v,v'\in G$. Furthermore, $G$ cannot be a simple polytope, since the vertex $w_{j}$  (with $j\ne 1,d-1$) is not simple in $G$ as it is adjacent to both $v$ and $v'$.  From \cref{lem:dplus2facets}, it now follows that $G$ has $d+2$ $(d-2)$-faces. Since $w_{d-1}$ is not a simple vertex and since $f_{k}(P)=f_{k}(\Pm(s-1,d+1-s))$ for some $k\in [1\ldots d-2]$, the computation \eqref{eq:subcase2.1} in Subcase \ref{case:strong-minimiser-2}.\ref{subcase:2.1}   applies and gives that $f_{k}(G)=f_{k}(\Pm(s-2,d+1-s))$ for some $k\in [2\ldots d-1]$.  By induction, the facet $G$ must be one of the nonpyramidal minimisers with $2d-2$ vertices, namely $\TA(d-1)$ or $\WP$. Since the $d-1$ vertices $w_1', v', w_2, \dots, w_{d-2}$ have degree $d$ in $G$, the facet $G$ cannot be $\TA(d-1)$ (\cref{rmk:structure-TA}). When $d=5$, the degree of $v$ in $G$ is 6, so $G$ cannot be $\WP$ (\cref{fig:polytopes} ). Hence, this case does not arise.}


\green{\textbf{It remains to consider the case $f_0(F_1)=2d-2$.} The proof of Subcase~\ref{case:strong-minimiser-2}.\ref{subcase:2.3} gives that $F_1$ has at least $d+2$ facets, as it is not a pyramid or a simple polytope. As $w_1'$ is a nonsimple vertex outside $F_1$, the facet $F_{1}$ is a largest facet with at least $d+2$ $(d-2)$-faces, and $f_{k}(P)=f_{k}(\Pm(s-1,d+1-s))$ for some $k\in [1\ldots d-2]$, the computation \eqref{eq:subcase2.1} in Subcase \ref{case:strong-minimiser-2}.\ref{subcase:2.1} applies and gives that $f_{k}(F_{1})=f_{k}(\Pm(s-2,d+1-s))$. In this scenario, the induction hypothesis on $d-1$ ensures $F_1$ is one of the nonpyramidal minimisers with $2d-2$ vertices, namely $\TA(d-1)$ or $\WP$. Since $\deg_{P}(v)=2d-2$, the degree of $v$ in $F_{1}$ is at least $2d-4$. Thus, when $d=5$, the facet $F_1$ cannot be $\WP$ (\cref{fig:polytopes}). Consequently, $F_1$ can be only $\TA(d-1)$. By the structure of $\TA(d-1)$ (\cref{rmk:structure-TA}), the degree of $v$ in $F_{1}$ is $2d-4$, so $v$ must be adjacent to $w_{1}$, and $v'$ must be simple in $F_{1}$. Moreover, without loss of generality, we may assume that $w_{d-1}$ and $w_{d-1}'$ are simple in $F_1$, and therefore simple in $P$. It follows that $v$ is not adjacent to either $w_{d-1}$ or $w_{d-1}'$. 
By the structure of $\TA(d-1)$ (\cref{rmk:structure-TA}), either $w_i$ or $w_i'$ is not simple in $F_{1}$ (for each $i\ne 1,d-1$) and therefore not simple in $P$, respectively. See \cref{fig:polytopes}.
}

\green{For $i=2,\ldots,d-2 $, consider the other facet $F_{i}$ in $P$ containing the $(d-2)$-face of $F$ missing the edge $w_{i}w_{i}'$. None of these facets can be pyramids, as $v$ cannot be an apex; recall that the vertices $w_{d-1}$ and $w_{d-1}'$ are in each such $F_{i}$ and $v$ is not adjacent to one of them. None of these facets are simple polytopes, as either $w_j$ or $w_j'$ (for $j\ne i,d-1$) is nonsimple in $F_{i}$. Thus, each $F_{i}$ has $d+2$ facets and $2d-2$ vertices. The same reasoning used with $F_1$ ensures the facets $F_i$ (for $i=2,\ldots,d-2$) are all $\TA(d-1)$. Hence, this gives $d-2$ copies of $\TA(d-1)$ as facets of $P$.} 
 
\green{ Since $w_1'$ is adjacent to $v'$ and $v$ is simple in each of the facets $F_{i}$, the graph of $P$ is now clear: the vertex $v'$ is adjacent to the vertices $v$, $w_1', \dots, w_{d-1}'$; the vertex $v$ is adjacent to the vertices $v'$, $w_1',\ldots, w_{d-2}'$, and $w_1,\ldots, w_{d-1}$.}

\green{ With the graph of $P$ in place and using \cref{cor:case2_equality}, we find that the unique facet containing $v'$ but not $v$ must be the simplex $\conv(R'\cup \set{v'})$. Since $w_{d-1}$ is simple in $P$, there is a further  simplex facet $\conv(R\cup \{ v\})$. Furthermore, the unique facet containing both $v$ and $v'$ and intersecting $F$ at a $(d-3)$-face must be the simplex $\conv(\{v,v'\}\cup \{w_i': i=1,\dots, d-2\})$. Hence, this gives three simplex facets of $P$.  }

\green{Since $v'$ is simple in $P$ and there are already $d$ facets containing $v'$, the other facet $F_{d-1}$ in $P$ containing the $(d-2)$-face of $F$ missing the edge $w_{d-1}w_{d-1}'$, does not contain $v'$. Thus $F_{d-1}$ must contain $v$, implying that $F_{d-1}$ is a $M(d-2, 1)$.We have now produced all the $d+3$ facets of $P$. These facets now give the vertex-facet incidence of the polytope, implying that $P$ is $\TA(d)$.}

\tinyskip

\noindent \green{{\textbf{\cref{rmk:equality-cases}(vi)}}: The facet 
    $F$ has $d+1$ vertices is a $(2,d-3)$-triplex.  Moreover,  the vertex $v$ has degree exactly $d+1$. Every vertex outside $F$ other than $v$ is simple the facet of $P$ containing it but not $v$. }
 
    
\green{As in \cref{case:strong-minimiser-3} of \cref{thm:strong-minimiser}, we denote  the $s-1$ vertices outside $F$ by $u_1,\ldots,u_{s-1}$ where $u_{1}=v$.  Denote the apexes of $F$ by $x_1,\ldots,x_{d-3}$.  For $i\in [1\ldots d-3]$, consider the facet $F_i$ intersecting $F$ at the $(d-2)$-face $F\setminus \set{x_{i}}$. It follows that $F_i$ has exactly $d+1$ vertices, and so $F_i$ is a pyramid. No vertex in $\set{u_2,\ldots,u_{s-1}}$ can be the apex of $F_{i}$, since they need to be simple in the facet containing them but not $v$. Thus, $v$ is the apex of $F_i$ for each $i$. It follows that $v$ is adjacent to all the vertices of $F$, and since the vertices outside a facet form a connected subgraph \cite[Thm.~4.1.6]{Pin24}, the degree of $v$ in $P$ would be at least $d+2$, contradicting our assumption.}	

\tinyskip
		
\noindent \green{{\textbf{\cref{rmk:equality-cases}(vii)}}: The facet 
    $F$ is a simplex, and so is every facet not containing a vertex of the largest degree.  Every vertex outside $F$ other than $v$ is simple in every facet of $P$ containing it but not $v$.} There is another simplex facet $F'$ such that $F\cup F'$ contains $d+s-1$ vertices.

\green{Suppose that  $s\le d-1$. By \cref{rmk:equality_case4}(i),  $\deg(v)=d+s-1$, and so is every vertex $F \cap F'$. The vertex figure $P/v$ of $v$ is $M(s, d-s-1)$. If $d-s-1\ge 1$, $P/v$ is a pyramid. Then the polytope is a pyramid: consider the facet containing $v$ but not the  vertex in $P$ corresponding to the apex of $P/v$; this facet necessarily contains $d+s-1$ vertices. The case of $P$ being a pyramid has already been covered. In case $d-s-1=0$,  $P/v$ is a simplicial $(d-1)$-prism and $\card (F\cap F')=2$. A vertex $w$ in $F\cap F'$ has the same degree $2d-2$ as $v$. Denote by $w'$ a vertex in $P/v$ corresponding to $w$. In $P/v$ there is a face that is a simplicial $(d-2)$-prism $R$ and does not contain $w'$. The facet $J$ in $P$ containing $v$ but not $w$ has $2d-3$ vertices. The facet $J$ has more vertices than $F$ and misses a vertex of largest degree, contrary to our assumption on $F$.}


\green{Assume that $s=d$. By \cref{rmk:equality_case4}(ii), we find that $\deg(v)=2d-2$ and that its vertex figure is a simplicial $(d-1)$-prism $R$. The prism $R$ is the convex hull of two $(d-2)$-simplices $\conv\{w_{1},\ldots,w_{d-1}\}$ and $\conv\{w'_{1},\ldots,w'_{d-1}\}$ so that the vertices $w_{i}$ and $w_{i}'$ are adjacent. For $i\in [1\ldots d-1]$, let $u_{i}$ and $u'_{i}$ represent the vertices in $F\cup F'$ corresponding to the vertices $w_{i}, w_{i}'\in P/v$, respectively.  Moreover, the unique vertex $w\in F\cap F'$ has degree $2d-2$ and is not adjacent to $v$.} 

\green{For $i=1,\ldots,d-1 $, consider the other facet $J_{i}$ in $P$ containing $v$ and arising from the $(d-2)$-face of $R$ missing the edge $u_{i}u_{i}'$. As in \cref{rmk:equality_case4}(ii), each facet $J_{i}$ has $2d-2$ vertices and contains the  vertex $w$. If a vertex $u_{j}$ or $u_{j}'$ has degree $2d-3$ in a facet $J_{i}$ ($i\ne j$), then $J_{i}$ would  be missing a vertex of largest degree in $P$, contradicting our assumption. Thus, no vertex $u_{j}$ or $u_{j}'$ can be an apex of a facet $J_{i}$. Each facet $J_{i}$ is not a pyramid: none of the vertices $u_{j}$ or $u_{j}'$ ($i\ne j$) can serve as  the apex, nor can $v$ or $w$ due to their degrees in $J_{i}$. Additionally, each facet $J_{i}$ is not  a simple polytope, as both $v$ and $w$ have degree $2d-4$ in $J_{i}$.  By \cref{lem:dplus2facets}, each facet $J_{i}$ has at least $d+2$ $(d-2)$-faces.} 


\green{Suppose that, for some $i\in [1\ldots d-1]$, the vertex $u_i$ or $u_i'$ is not simple in $P$. Then, $J_{i}$ is a facet with $2d-2$ vertices and at least $d+2$ $(d-2)$-faces, and since $f_{k}(P)=f_{k}(\Pm(d-1,1))$ for some $k\in [1\ldots d-2]$, the computation \eqref{eq:subcase2.1} in Subcase \ref{case:strong-minimiser-2}.\ref{subcase:2.1}   applies and gives that $f_{k}(J_{i})=f_{k}(\Pm(d-2,1))$. In this scenario, the induction hypothesis on $d-1$ ensures that $J_{i}$ is one of the nonpyramidal polytopes in Parts (ii)--(v) of the theorem, namely $Z(d-1)$, $\TA(d-1)$, or $\WP$ (\cref{fig:polytopes}). In $Z(d-1)$ and $\TA(d-1)$, all the nonsimple vertices are pairwise adjacent, while in $J_i$, the vertex $v$ is not adjacent to $w$. If $J_i$ were $\WP$, then $d=5$ and the degree of $v$ in $J_i$ would be $2d-4=6$, but no vertex in $\WP$ has degree six. Hence, this scenario does not arise.}

%

    

\green{We can now assume that all the vertices $u_i, u_i'$ are simple in $P$. Thus, there are exactly two nonsimple vertices in $P$, namely $v$ and $w$. The graph of $P$ is determined: there are two disjoint $(d-2)$-simplices $\conv\{u_{1},\ldots,u_{d-1}\}$  and $\conv\{u'_{1},\ldots,u'_{d-1}\}$ whose vertices are all adjacent to both $v$ and $w$. 
 However, this is not the graph of a $d$-polytope with $d\ge 5$, as the removal of $v$ and $w$ would disconnect the graph, contradicting the vertex $d$-connectivity of the graph by Balinski's theorem \cite{Bal61}.}
		
 
\end{proof}


\begin{thebibliography}{51}

\bibitem{Bal61}
M.~L. Balinski, \emph{On the graph structure of convex polyhedra in
  {$n$}-space}, {Pac. J. Math.} \textbf{11} (1961), 431--434. \MR{0126765 (23
  \#A4059)}



\bibitem{Bar71}
D.~W. Barnette, {The minimum number of vertices of a simple polytope},
  \emph{Isr. J. Math.} \textbf{10} (1971), 121--125. \MR{0298553 (45 \#7605)}

\bibitem{Bar73}
D.~W. Barnette, {A proof of the lower bound conjecture for convex
  polytopes}, \emph{Pac. J. Math.} \textbf{46} (1973), 349--354. \MR{0328773 (48
  \#7115)}

\bibitem{BriDun73}
D. Britton and J. D. Dunitz,
A complete catalogue of polyhedra with eight or fewer vertices.
{\it Acta Cryst. Ser. A \bf29} (1973), 362--371.

\bibitem{Bro83}
A.~Br{\o}ndsted, \emph{An introduction to convex polytopes}, Graduate Texts in
  Mathematics, vol.~90, Springer-Verlag, New York, 1983. \MR{683612
  (84d:52009)}

\bibitem{DooNevPin18}
J.~Doolittle, E.~Nevo, G.~Pineda-Villavicencio, J.~Ugon, and D.~Yost, {On the reconstruction of polytopes}, \emph{Discrete Comput. Geom.} \textbf{61} (2018),
  no.~2, 285--302.

\bibitem{Fir20}
M.~Firsching, {The complete enumeration of 4-polytopes and 3-spheres with
  nine vertices}, \emph{Isr. J. Math.} \textbf{240} (2020), 417--441.

\bibitem{Gru70}
B.~Gr{{\"u}}nbaum, Polytopes, graphs, and complexes. \emph{Bull. Amer. Math. Soc.} \textbf{76} (1970), 1131--1201.

\bibitem{Gru03}
B.~Gr{{\"u}}nbaum, \emph{Convex polytopes}, 2nd ed., Graduate Texts in
  Mathematics, vol. 221, Springer-Verlag, New York, 2003, Prepared and with a
  preface by V. Kaibel, V. Klee and G. M. Ziegler.


\bibitem{McMShe70}
P.~McMullen and G. C. Shephard, \emph{Convex polytopes and the upper bound conjecture}, London Mathematical Society Lecture Note Series 3, Cambridge University Press, London-New York, 1970.



\bibitem{Pin24}
G.~Pineda-Villavicencio, \emph{Polytopes and graphs}, Cambridge Studies in
  Advanced Mathematics, vol. 211, Cambridge Univ. Press, Cambridge, 2024.
   
\bibitem{PinTriYos24}
G.~Pineda-Villavicencio, A.~Tritama, and D.~Yost, \emph{A lower bound theorem
  for $d$-polytopes with $2d+2$ vertices}, preprint 2024.
 
 \bibitem{PinUgoYos16a}
G.~Pineda-Villavicencio, J.~Ugon, and D.~Yost, {The excess degree of a
  polytope}, \emph{ SIAM J. Discrete Math.} \textbf{32} (2018), no.~3, 2011--2046.

\bibitem{PinUgoYos15}
G.~Pineda-Villavicencio, J.~Ugon, and D.~Yost, {Lower bound theorems for
  general polytopes}, \emph{Eur. J. Comb.} \textbf{79} (2019), 27--45.

\bibitem{PinUgoYos22}
G.~Pineda-Villavicencio, J.~Ugon, and D.~Yost, {Minimum number of edges of
  $d$-polytopes with $2d+2$ vertices}, \emph{Electron. J. Combin.} (2022), P3.18.

 
\bibitem{PinYos22}
G.~Pineda-Villavicencio and D.~Yost, {A lower bound theorem for
  $d$-polytopes with $2d+1$ vertices}, \emph{SIAM J. Discrete Math.} \textbf{36}
  (2022), 2920--2941.

\bibitem{PrzYos16}
K.~Przes{\l}awski and D.~Yost, {More indecomposable polyhedra}, \emph{Extr.
  Math.} \textbf{31} (2016), 169--188.
 
 \bibitem{Xue21}
L.~Xue, {A proof of {Gr\"unbaum's} lower bound conjecture for polytopes},
  \emph{Isr. J. Math.} {\bf245} (2021), 991–1000.
  
  \bibitem{Xue22}
L.~Xue, {A lower bound theorem for strongly regular {CW} spheres with up to $2d +
  1$} vertices, \emph{Discrete Comput. Geom.} (2023). https://doi.org/10.1007/s00454-023-00553-6

\bibitem{Zie95}
G.~M. Ziegler, \emph{Lectures on polytopes}, Graduate Texts in Mathematics,
  vol. 152, Springer-Verlag, New York, 1995. 

\end{thebibliography}
\end{document}